\def\RR{\mathbb{R}}
\newcommand{\lp}{\left(}
\newcommand{\rp}{\right)}
\newtheorem{theorem}{Theorem}[section]
\newtheorem{lemma}[theorem]{Lemma}
\newtheorem{definition}{Definition}[section]
\newtheorem{remark}{Remark}[section]
\def\qed{\hbox{${\vcenter{\vbox{                        
   \hrule height 0.4pt\hbox{\vrule width 0.4pt height 6pt
   \kern5pt\vrule width 0.4pt}\hrule height 0.4pt}}}$}}
\newcommand{\be}{\begin{equation}}
\newcommand{\ee}{\end{equation}}
\newcommand{\bee}{\begin{equation*}}
\newcommand{\eee}{\end{equation*}}
\newcommand{\bea}{\begin{eqnarray}}
\newcommand{\eea}{\end{eqnarray}}
\newcommand{\bs}{\begin{split}}
\newcommand{\es}{\end{split}}
\numberwithin{equation}{section}
\begin{document}

\title[fractional geometric flow]{Weak and smooth solutions for a fractional Yamabe flow: the case of general compact and locally conformally flat manifolds}
\author[Daskalopoulos, Sire,  V\'azquez]{\rm Panagiota Daskalopoulos, \\Yannick Sire  \\ and Juan-Luis V\'azquez}
\date{}
\maketitle

\begin{abstract}

 As a counterpart of the classical Yamabe problem, a  fractional Yamabe flow has been introduced  by Jin and Xiong (2014) on the sphere.  Here we pursue its study  in the context of general compact smooth manifolds with positive fractional curvature.  First, we prove that the flow is locally well posed in the weak sense on any compact manifold. If the manifold is locally conformally flat  with positive Yamabe invariant, we also prove that the flow is smooth and converges to a constant scalar curvature metric. We provide different proofs using extension properties  introduced by Chang and Gonz\'alez (2011) for the conformally covariant fractional order operators.
\end{abstract}

\tableofcontents

\newpage

\section{Introduction}

Given  a compact Riemannian  manifold $(M, g_0)$   of dimension $n \geq 2$,  Hamilton introduced in \cite{hamilton} the following evolution for a metric $g(t)$
\begin{equation}
\left\{
\begin{array}{l}
\partial_t g(t) = -\Big ( R_{g(t)}-r_{g(t)}\Big )g(t)\\
g(0)=g_0,
\end{array}
\right .
\end{equation}
where $R_{g(t)}$ is the scalar curvature of $g(t)$ and
$$
r_{g(t)}=\text{vol}_{g(t)}(M)^{-1} \int_M R_{g(t)}\,d\text{vol}_{g(t)}.
$$
 This gave rise to an extensive literature, see e.g. \cite{chow,ye,struwe,brendle1,brendle2}.
On the other hand, in a seminal paper \cite{GZ} Graham and Zworski  constructed a family of conformally covariant operators $P^g_\gamma$, $\gamma \in (0,n/2)$,   on the conformal infinity of a Poincar\'e-Einstein manifold. These operators appear to be the higher-order generalizations of the conformal Laplacian. They coincide with the GJMS operators of \cite{GJMS}  for suitable integer values of $\gamma$. This paved the way to define an interpolated quantity $Q^g_\gamma$ for each $\gamma \in (0,n/2)$, which is just the   scalar curvature for $\gamma=1$, and the $Q$-curvature for $\gamma=2$ (see Section \ref{confop}). This new notion of curvature has been investigated in \cite{QG,CG, GMS}.

\subsection{The nonlocal flow}

 The Graham-Zworski approach motivates the introduction of a new flow problem with fractional curvature that replaces the curvature in Hamilton's Yamabe flow \cite{hamilton} by the new curvatures. The problem is posed as follows:
 {\sl Given  a compact Riemannian manifold $(M^n, g_0)$ of dimension $n \geq 2$ and given $\gamma \in (0,n/2)$,  to find an evolving metric $g(t)$ on $M $ such that
\begin{equation}\label{problem}
\left\{
\begin{array}{l}
\partial_t g = -k(t)\Big (Q^{g(t)}_\gamma-q^{g(t)}_\gamma \Big )g(t)\\
g(0)=g_0\,,
\end{array}
\right.
\end{equation}
where
$$
k(t)=\frac{n-2\gamma}{2n} \text{vol}_{g(t)}(M)^{\frac{2\gamma -n}{n}}
$$
and
$$q^g_\gamma= \text{vol}_{g(t)}(M)^{-1}\int_{M} Q^g_\gamma \,d\text{vol}_{g}\,.
$$
} \noindent Note that $k(t)$ and $q^g_\gamma$ depend only on $t$, and denote (see \cite{CG})
$$
Q_\gamma^g= P^g_\gamma(1).
$$
This flow is the gradient flow of the normalized total $\gamma$-curvature functional
\begin{equation}
 {\mathcal S}_\gamma(g) =\text{vol}_g(M)^{\frac{2\gamma -n}{n}}\int_M Q^g_\gamma\,d\text{vol}_g
\end{equation}
where $g \in [g_0], $  the conformal class of $g_0$, as observed in \cite{JX}. For $\gamma=1$, the  defined flow  is just the  Yamabe flow introduced by Hamilton.
 This new geometrical problem has been already considered  by Jin and Xiong in \cite{JX} where the authors investigate the flow on the sphere $M=\mathbb S^n$ with the round metric. They introduce the flow actually in this context but the generalization on any compact manifold $M$ is straightforward.
An important property of the previous flow is that it conserves the volume in time.

\subsection{General flow problem and results} The aim of the present paper is twofold. We first prove existence and uniqueness of mild and weak solutions of the fractional flow on any compact manifold with positive fractional curvature and then move on to the case of locally conformally flat manifolds with non-negative Yamabe invariant in the spirit of Ye's work \cite{ye}. For reasons which will become transparent later, we study the flow \eqref{problem} for  $\gamma \in (0,1)$, and this assumption is kept throughout the paper.

We prove the following results.

\begin{theorem}\label{LWP}
Assume that $g(0) \in [g_0]$ is smooth and that $M$ is an $n$-dimensional  smooth compact boundaryless manifold, being the conformal infinity of a Poincar\'e-Einstein manifold  ($X^{n+1},g_+)$. Assume also that $Q^{g_0}_\gamma \geq 0$ and $\lambda_1(g_+) \geq \frac{n^2}{2}-\gamma^2$. Then  the flow in \eqref{problem} with initial metric $g(0)$ exists for all times in the  sense of mild solutions and weak solutions if  $\gamma \in (0,1/2]$,  and provided $H=0$ in the case $\gamma \in (1/2,1)$.  Here $H$ denotes the mean curvature of $\partial_\infty X=M$.
\end{theorem}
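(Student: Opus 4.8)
The plan is to reduce the geometric flow \eqref{problem} to a scalar nonlocal fast-diffusion equation for the conformal factor and then to solve it by nonlinear semigroup theory, using the Chang--Gonz\'alez extension to analyse $P^{g_0}_\gamma$. Write $g(t)=u(t)^{4/(n-2\gamma)}g_0$ with $u>0$. By conformal covariance $P^{g_0}_\gamma u=u^{m}\,Q^{g(t)}_\gamma$ with $m=\frac{n+2\gamma}{n-2\gamma}$, together with $d\mathrm{vol}_{g(t)}=u^{2n/(n-2\gamma)}d\mathrm{vol}_{g_0}$ and the conservation of $\mathrm{vol}_{g(t)}(M)$ (so that $k(t)\equiv k$ is a fixed constant and $q^{g(t)}_\gamma=\mathrm{vol}(M)^{-1}\int_M u\,P^{g_0}_\gamma u\,d\mathrm{vol}_{g_0}$ is a multiple of the total $\gamma$-energy), system \eqref{problem} is equivalent to
\[
\partial_t\big(u^{m}\big)=-\tfrac{n+2\gamma}{4}\,k\,\Big(P^{g_0}_\gamma u-q_\gamma(t)\,u^{m}\Big),\qquad u(\cdot,0)=u_0 ,
\]
i.e., with $v=u^m$, a fractional fast-diffusion equation $\partial_t v+C\,P^{g_0}_\gamma\!\big(v^{1/m}\big)=C\,q_\gamma(t)\,v$ in which $1/m\in(0,1)$ and $q_\gamma(t)$ is a scalar depending on $v(\cdot,t)$ only through one spatial integral.

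Next I would realise $P^{g_0}_\gamma$ by the degenerate-elliptic extension on the given filling $(X^{n+1},g_+)$. Under $\lambda_1(g_+)\ge \frac{n^2}{2}-\gamma^2$ the associated Dirichlet problem is well posed and the adapted zeroth-order term $E$ appearing in the extension equation is non-negative, so $P^{g_0}_\gamma$ is a non-negative self-adjoint operator on $M$; and for $\gamma\in(1/2,1)$ the hypothesis $H=0$ removes the mean-curvature contribution to the extension, keeping the boundary operator in divergence form. Concretely, for $u\in H^\gamma(M)$ there is $U\in H^1(X,\rho^a)$, $a=1-2\gamma$, solving $-\mathrm{div}(\rho^a\nabla U)+E\,U=0$ in $X$, $U|_M=u$, with $P^{g_0}_\gamma u=-d_\gamma\lim_{\rho\to0}\rho^a\partial_\rho U$ up to an explicit zeroth-order term. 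The flow then becomes the boundary evolution for $u^m$ coupled to this interior problem, and it is the gradient flow of $\mathcal{S}_\gamma$ for the weighted Dirichlet energy $\mathcal{E}(u)=\tfrac12\int_X\rho^a|\nabla U|^2+\tfrac12\int_X E\,U^2$.

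For mild solutions I would apply the Crandall--Liggett generation theorem to the operator $A_0v:=C\,P^{g_0}_\gamma\!\big(v^{1/m}\big)$ (on non-negative $v$ for which this lies in $L^1(M,d\mathrm{vol}_{g_0})$), perturbed by the linear lower-order term $v\mapsto C\,q_\gamma(t)\,v$. The two points to establish are that $A_0$ is $m$-accretive in $L^1(M)$: accretivity is a Kato-type inequality for $P^{g_0}_\gamma$ composed with the increasing function $\varphi(s)=s^{1/m}$, obtained by testing the extension problem for $\varphi(v_1)-\varphi(v_2)$ against the sign of its boundary trace and invoking $E\ge0$ and the maximum principle for the weighted operator; the range condition $\mathrm{Ran}(I+\lambda A_0)=L^1(M)$ follows by solving $w^m+\lambda C\,P^{g_0}_\gamma w=f$ as the minimiser of the strictly convex coercive functional $w\mapsto\frac{1}{m+1}\int w^{m+1}+\lambda C\,\mathcal{E}(w)-\int fw$. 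Since the flow decreases $\mathcal{S}_\gamma$ and $Q^{g_0}_\gamma\ge0$ propagates to $Q^{g(t)}_\gamma\ge0$ (hence $q_\gamma(t)\ge0$) by the parabolic maximum principle, one gets $0\le q_\gamma(t)\le \mathrm{vol}(M)^{-2\gamma/n}\mathcal{S}_\gamma(g_0)$ uniformly in $t$, so the perturbation is globally Lipschitz on the invariant class; combined with the $L^1$-contraction and the volume/maximum-principle bounds that forbid extinction and blow-up, this yields a unique global mild solution, hence, through the change of unknown, the assertion of the theorem.

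Finally, the mild solution is a weak solution because the energy dissipation identity $\frac{d}{dt}\mathcal{S}_\gamma(g(t))\le0$ provides $\partial_t(u^m)\in L^2_{loc}$ and $U\in L^2_{loc}\big(H^1(X,\rho^a)\big)$, which is exactly the integrability needed to test the extension formulation of \eqref{problem} against admissible functions; uniqueness of weak solutions then again follows from the $L^1$-contraction of the semigroup. I expect the decisive obstacle to be the $m$-accretivity step: proving the Kato-type inequality and the $L^1$ range condition for the \emph{nonlocal} operator $P^{g_0}_\gamma$ on a general compact manifold forces one to control the degenerate-elliptic extension with the potential $E$ and the boundary trace in the weighted space $H^1(X,\rho^a)$ with care, and this is precisely where the restriction $\gamma\in(0,1)$ (so that the extension is a single second-order equation) and, for $\gamma\in(1/2,1)$, the condition $H=0$ enter.
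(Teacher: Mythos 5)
Your overall architecture coincides with the paper's: conformal reduction to a fractional fast-diffusion equation, realisation of $P^{g_0}_\gamma$ via the Chang--Gonz\'alez extension, Crandall--Liggett implicit time discretization with an $L^1$ $T$-contraction for the resolvent elliptic problems, and then passage from mild to weak solutions. However, two steps in your argument have genuine gaps.

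First, the accretivity step. You propose to run the Kato-type inequality with the extension of Lemma \ref{defining-function}, ``invoking $E\ge 0$''. The paper explicitly discards that formulation precisely because the zeroth-order interior term $E(\rho)$ is not suitable for the contractivity argument; there is no claim, and no proof in your proposal, that $E\ge 0$ under the spectral hypothesis. Instead one must use the special defining function $\rho^*$ of Lemma \ref{new-defining-function}, for which $E(\rho^*)=0$ and the zeroth-order term migrates to the boundary as $Q^{g_0}_\gamma\,u$. This is exactly where the hypothesis $Q^{g_0}_\gamma\ge 0$ enters: in the contraction inequality of Theorem \ref{Thelliptic} the term $\int_M Q^{g_0}_\gamma (U-\tilde U)_+$ has a favourable sign and can be discarded. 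Your proposal instead uses $Q^{g_0}_\gamma\ge 0$ only to control $q_\gamma(t)$, via an asserted ``parabolic maximum principle'' propagating $Q^{g(t)}_\gamma\ge 0$ along the flow; that propagation is not established anywhere and is not needed if one organises the argument as above.

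Second, the treatment of the normalisation term. You keep $q_\gamma(t)\,v$ in the equation and treat it as a ``globally Lipschitz perturbation'' of the $m$-accretive operator. But $q_\gamma(t)$ is a nonlocal functional of the unknown itself, so this is not a standard Lipschitz perturbation in the Crandall--Liggett framework, and the a priori bound you invoke for it depends on the unproven propagation of nonnegativity of $Q^{g(t)}_\gamma$. The paper avoids this circularity entirely: it solves the \emph{un-rescaled} flow $\partial_\tau u^{N_\gamma}=-P^{g_0}_\gamma u$ (for which the generator is exactly the $T$-contractive operator of Theorem \ref{Thelliptic}) and then recovers the volume-normalised flow \eqref{problem} by an explicit reparametrization of time, $\frac{d\tau}{dt}=e^{F(t)(1-1/N_\gamma)}$ with $e^{F}$ determined from the conserved volume. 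You should restructure the semigroup argument around the un-rescaled equation and perform the time change afterwards; as written, the perturbation step does not close.
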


 A number of remarks follow:

\noindent { (i) A reminder of the concepts of the Graham-Zworski theory is given in Section~\ref{sect2}.

\noindent (ii) In the previous theorem, by mild sense, we mean that the flow exists for all times using a semi-group approach. It basically means that, as soon as a contraction property is satisfied, the Crandall-Liggett theorem   \cite{CL} ensures the existence of a mild solution as limit of the Implicit Time Discretization Scheme. Then we connect to weak solutions of the flow.}

\noindent (iii) {\rm The present paper deals with flows with non-negative fractional curvarture. Due to our approach, we are not able to deal with negative $Q^{g_0}_\gamma$ curvature. This is due to the fact that in this case one cannot show contractivity of the semi-group approach in the Crandall-Liggett theory. We will leave it as an open problem and hope to investigate it in the next future.
However, a remark needs to be done in the negative case. Indeed, if one considers a constant $Q^{g_0}_\gamma$ curvature equal to $-1$ and space-independent solutions, one gets the following ODE for the un-rescaled flow on $M$ (see \eqref{extended2})
$$
\partial_t U^{N_\gamma} = -U, \quad N_\gamma= \frac{n+2\gamma}{n-2\gamma}.
$$
Since $N_\gamma >1$, one has two different solutions, one being trivial $U(t) \equiv 0$ and the other one non trivial $U(t)=k t^{1/(N_\gamma-1)}$. This is a counterexample to uniqueness.}


In order to state our next theorem, we introduce the well-known Yamabe constant: if $(M,g)$ is a compact manifold, then the Yamabe constant $Y(M)$ is the quantity
\begin{equation}
Y(M):=\inf \left \{ \frac{\int_M R_h dV_h}{\Big( \int_M dV_h\Big )^\frac{n-2}{n}},\,\,\,h \in [g]\right \}
\end{equation}
where $R_h$ is the scalar curvature of $M$ with respect to the metric $h$.

\begin{theorem}\label{GWP}
Let $\gamma \in (0,1)$ and assume that $g(0) \in [g_0]$ is smooth, has nonnegative fractional curvature, and that $M$ is locally conformally flat with positive Yamabe constant $Y(M)$. Then the flow in \eqref{problem} with initial metric $g(0)$ exists for all times $t \in (0,+\infty)$ and is smooth. Furthermore, there exists a smooth metric $g_\infty$ such that
\begin{equation}
\lim_{t \to +\infty} \|g(t)-g_\infty\|_{C^\ell}=0
\end{equation}
 for any integer $\ell$, and $Q^{g_\infty}_\gamma$ is constant.
\end{theorem}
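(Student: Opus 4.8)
The plan is to reduce the geometric flow \eqref{problem} to a scalar parabolic PDE on $M$ for a conformal factor, via the extension picture of Chang--Gonz\'alez recalled in Section~\ref{sect2}. Writing $g(t)=u(t)^{\frac{4}{n-2\gamma}}g_0$ (with $u>0$), the flow becomes a nonlinear evolution equation of fast-diffusion type
\[
\partial_t\big(u^{N_\gamma}\big)=-c_{n,\gamma}\,\big(P^{g_0}_\gamma u - Q^{g_0}_\gamma u\big)+ k(t)\,q^{g(t)}_\gamma\,u^{N_\gamma},\qquad N_\gamma=\frac{n+2\gamma}{n-2\gamma},
\]
where $P^{g_0}_\gamma$ is realized, through the Poincar\'e--Einstein extension, as a Dirichlet-to-Neumann operator for a degenerate elliptic equation on $X^{n+1}$ with weight in the Muckenhoupt class $A_2$. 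Theorem~\ref{LWP} already gives a global mild/weak solution; the first task is therefore \emph{smoothing and regularity}: starting from the weak solution, bootstrap using (i) the $A_2$-weighted Schauder and De Giorgi--Nash--Moser theory for the extension problem, and (ii) standard parabolic regularity for the scalar equation once $u$ is known to be bounded away from $0$ and $\infty$. The local conformal flatness is what makes the extended metric (locally) the flat half-space problem, so the fractional operator is, up to lower order terms, the pure fractional Laplacian $(-\Delta)^\gamma$, and the classical regularity theory for fractional fast diffusion applies.

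The second and central task is the \emph{a priori bounds}: one must show $0<c\le u(x,t)\le C<\infty$ uniformly in $t\in[0,\infty)$. Here the hypothesis $Y(M)>0$ enters decisively, exactly as in Ye's treatment of the classical ($\gamma=1$) Yamabe flow \cite{ye}. The volume is conserved along the flow, and the functional $\mathcal S_\gamma$ is monotone nonincreasing (it is its gradient flow), bounded below by the fractional Yamabe invariant, which is positive because $Y(M)>0$ and $M$ is locally conformally flat (one compares the fractional and classical Yamabe quantities using the explicit extension on $\mathbb S^n$ / flat model, as in \cite{CG}). Monotonicity plus boundedness of $\mathcal S_\gamma$ gives an $L^2$-in-time bound on $\partial_t g$, hence $\int_0^\infty\!\!\int_M (Q^{g(t)}_\gamma-q^{g(t)}_\gamma)^2\,dV_{g(t)}\,dt<\infty$. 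Combining this with a fractional Sobolev (Moser-type iteration on the weighted extension) and the positivity of the Yamabe constant yields the uniform two-sided bound on $u$; nonnegativity of $Q^{g_0}_\gamma$ propagates along the flow by a maximum principle for the extension, keeping $q^{g(t)}_\gamma$ controlled from below and preventing degeneration.

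With the uniform bounds in hand, parabolic regularity upgrades them to uniform $C^{\ell}$ bounds on $u(t)$ for every $\ell$, so the orbit $\{g(t)\}$ is precompact in every $C^\ell$. The third task is \emph{convergence}: from $\int_0^\infty\|\partial_t g\|^2\,dt<\infty$ and the uniform higher-order bounds one extracts a sequence $t_j\to\infty$ with $g(t_j)\to g_\infty$ in $C^\ell$ and $Q^{g_\infty}_\gamma=q_\infty$ constant. To pass from subsequential to full convergence one invokes a \L ojasiewicz--Simon gradient inequality for the analytic functional $\mathcal S_\gamma$ near the constant-curvature critical point $g_\infty$ (the relevant elliptic operator being the linearized fractional Yamabe operator, which is Fredholm on the weighted Sobolev spaces), exactly the mechanism used for the classical flow; this forces $\|g(t)-g_\infty\|_{C^\ell}\to0$ as $t\to\infty$ for every $\ell$.

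I expect the \emph{uniform lower bound $u\ge c>0$} to be the main obstacle. Unlike the semilinear heat equation, the equation is of (slow/fast) porous-medium type, so a naive maximum principle does not immediately prevent $u$ from touching zero; one genuinely needs the quantitative interplay between conserved volume, the positive fractional Yamabe invariant, and a fractional Harnack/Moser estimate on the degenerate extension — and the nonlocality means one must control boundary terms (here is where the hypothesis $H=0$, when $\gamma\in(1/2,1)$, or $\gamma\le 1/2$, matters, as in Theorem~\ref{LWP}). Once that bound is secured, the remaining steps are technical adaptations of the now-standard weighted elliptic/parabolic machinery and the \L ojasiewicz--Simon scheme.
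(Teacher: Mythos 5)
Your overall architecture (reduction to the fractional fast--diffusion equation for the conformal factor, global weak solution from Theorem~\ref{LWP}, regularity bootstrap on the weighted extension, convergence via the \L{}ojasiewicz--Simon/Simon mechanism) matches the paper's, and the last step is indeed what the paper does (it invokes Simon's results once uniform $C^\ell$ bounds are in hand). The genuine divergence, and the gap, is in the step you yourself flag as the main obstacle: the uniform two-sided bound $0<\alpha\le u\le\beta$. Your proposed mechanism --- monotonicity of $\mathcal S_\gamma$ along the gradient flow, the resulting bound $\int_0^\infty\int_M(Q^{g(t)}_\gamma-q^{g(t)}_\gamma)^2\,dV\,dt<\infty$, and a Moser-type iteration on the weighted extension --- does not close. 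An $L^2$-in-time integral bound on the curvature deviation is compatible with concentration (bubbling) of the conformal factor at a point, which is exactly the known failure mode of the Yamabe flow: ruling it out is the hard part, and for the classical flow it required either Ye's reflection argument (locally conformally flat case) or Brendle's blow-up analysis with the positive mass theorem (general case). Moser iteration from the energy inequality alone cannot produce the uniform lower bound on $u$, and nonnegativity of $Q^{g_0}_\gamma$ propagating by the maximum principle does not substitute for it.

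What the paper actually does, following Ye, is to exploit local conformal flatness in an essential, global way that your proposal only uses for local regularity. By Schoen--Yau, the universal cover $\tilde M$ develops conformally onto a dense domain $\Omega\subset\mathbb S^n$ with limit set $\Gamma=\partial\Omega$; the lifted solution $\tilde u$ blows up at $\Gamma$ (Lemma~\ref{SY}), and after stereographic projection one runs an Alexandrov reflection / moving-planes argument in $\mathbb R^n$ (using the parabolic moving-plane propositions of Jin--Xiong and the asymptotic expansion of the Kelvin transform) to bound the ``center of mass'' of $\tilde u$ and deduce the scale-invariant gradient estimate $\sup_M|\nabla_{g_0}u|/u\le C$ uniformly in $t$. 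This yields the Harnack inequality $\sup_M u\le C\inf_M u$ (Theorem~\ref{harnack}), which combined with volume conservation gives the two-sided bound; only then do regularity and convergence follow. To repair your proof you would need to replace the energy/Moser step by this reflection argument (or an equivalent device excluding concentration); as written, the central estimate is asserted rather than proved.
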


\subsection{Formulation as a fractional fast  diffusion}
A main feature of the method developed in the present paper is to provide a solution to the  fractional  Yamabe problem by parabolic arguments. An important property of the operators $P^g_\gamma$ is their conformal covariance. More precisely,  as usual we write  $g=u^{4/(n-2\gamma)} \bar g$, then we have for any $f \in C^\infty(M)$
the conformal law (see \cite{GZ})
$$
P^{\bar g}_\gamma(uf)= u^{\frac{n+2\gamma}{n-2\gamma}}P^{g}_\gamma(f).
$$
Indeed, the metric $g_\infty$ in Theorem \ref{GWP} satisfies
$$
P_\gamma ^{g_\infty} u_\infty=Q_\gamma^{g_\infty}u_\infty^{\frac{n+2\gamma}{n-2\gamma}}\,\,\,\,\mbox{on }\,\,\, M,
$$
where $Q_\gamma^{g_\infty}$ is constant and $g_\infty=u_\infty^{\frac{4}{n-2\gamma}}g_0$. The fractional Yamabe problem has been investigated for $\gamma \in (0,1)$ in \cite{QG} and in the case of locally conformally flat manifolds in \cite{QR} with positive Yamabe constant (with an additional assumption on the Poincar\'e exponent of the Kleinian group). See also \cite{KMW}.
In the case of the sphere (or $\RR^n$), the classification of solutions of the Yamabe equation has been obtained in \cite{li,li2}.

As already noticed in the case of the standard Yamabe flow, the fractional Yamabe flow \eqref{problem} is related to a porous medium type equation. Indeed, one can eliminate the factor $k(t)$ by a time rescaling,  so that our nonlocal flow  changes into the following Cauchy problem
\begin{equation}\label{problemRescaled}
\left\{
\begin{array}{l}
\partial_t g = \Big (q^g_\gamma-Q^g_\gamma \Big )g\\
g(0)=g_0,
\end{array}
\right.
\end{equation}
where $t$ denotes now the new time.
Furthermore, if we write $g(t)=u^{\frac{4}{n-2\gamma}} g_0$, this Cauchy problem reduces to
\begin{equation}\label{problemu}
\left\{
\begin{array}{l}
\partial_t (u^{N_\gamma})  =-P_\gamma ^{g_0} u+q^{g(t)}_\gamma u^{N_\gamma}\,,\,\,  x \in M\\
u(0)=u_0,
\end{array}
\right.
\end{equation}
where $N_\gamma= \frac{n+2\gamma}{n-2\gamma}$,  up to a numerical constant that is absorbed into the time variable. Since $N_\gamma>1$, this is a {\sl fast diffusion equation } of fractional type on the manifold $M$, a very convenient formulation for  our calculations.

\medskip

$\bullet $ We recall that when $M=\RR^n$ \ with the flat metric (or when $M$ is the sphere through stereographic projection), Equation \eqref{problemu} becomes
\begin{equation}
\left\{
\begin{array}{l}
\partial_t u^{N_\gamma}  =-(-\Delta)^\gamma u\,,\quad \ x \in \RR^n,\\
u(0)=u_0,
\end{array}
\right.
\end{equation}
where $(-\Delta)^\gamma$ is the so-called fractional Laplacian, i.e., the Fourier multiplier with symbol $|\xi|^{2\gamma}$. Such an equation has been investigated in \cite{PMEADV} and \cite{PMECPAM}  and we will make use of several of their techniques in the present paper. Actually, in the paper \cite{PMECPAM}  the so-called non-rescaled flow  is thoroughly  investigated.

\medskip

\begin{remark}
{\rm It is important to notice that the case $\gamma=1/2$ corresponds actually to Escobar's problem \cite{escobar}. This has been emphasized in \cite{CG}.
This provides a new approach to the scalar curvature flow on manifolds with boundary. We refer the reader to  Brendle's \normalcolor \cite{brendleAsian} for the study of the Yamabe flow on manifolds with boundary.}
\end{remark}


\section{Conformal fractional Laplacians}\label{sect2}

\subsection{Poincar\'e-Einstein manifolds and Graham-Zworski theory}
Before proceeding further, we give a summary of the Graham-Zworski theory. Let $M$ be a compact manifold of dimension $n$ with a metric $\hat g$. Let $X^{n+1}$ be a compact manifold of dimension $n+1$ with boundary $M$. A function $\rho$ is a {\sl defining function } of $\partial X$ in $X$ if
$$\rho>0 \mbox{ in } X, \quad \rho=0 \mbox{ on }\partial X,\quad d\rho\neq 0 \mbox{ on } \partial X. $$
We say that $g^+$ is a conformally compact metric on $X$ with {\sl conformal infinity} $(M,[\hat g])$ if there exists a defining function $\rho$ such that the manifold $(\bar X,\bar g)$ is compact for the metric $\bar g=\rho^2 g^+$, and $\bar g|_M\in [\hat g]$, the conformal class of $\hat g$. If, in addition $(X^{n+1}, g^+)$ is a conformally compact manifold and $Ric[g^+] = -ng^+$, then we call $(X^{n+1}, g^+)$ a conformally compact Einstein manifold.  In the typical example $X$ is the Poincar\'e disk with the hyperbolic metric and $M$ is the infinite horizon at $|x|=1$.

It is well known that, given a conformally compact, asymptotically hyperbolic manifold $(X^{n+1}, g^+)$ and a representative $\hat g$ in $[\hat g]$ of the conformal infinity $M$, there is a defining function $\rho$ such that, on $M \times (0,\epsilon)$ in $X$, $g^+$ has the normal form \ $g^+ = \rho^{-2}(d\rho^2 + g_\rho)$ where $g_\rho$ is a one-parameter family of metrics on $M$ such that $g_\rho|_{M}=\hat g$. Moreover, $g_\rho$ has an asymptotic expansion which contains only even powers of $\rho$, at least up to degree $n$.

Graham-Zworski \cite{GZ} have shown that, given $f\in C^\infty(M)$ and  $s\in\mathbb C$, the eigenvalue problem
\be\label{equation-GZ}
-\Delta_{g^+} u-s(n-s)u=0,\quad\mbox{in } X
\ee
has a solution of the form
\be\label{general-solution}u = F \rho ^{n-s} + H\rho^s,\quad F|_{\rho=0}=f\ee
for all $s\in\mathbb C$ unless $s$ belongs to the spectrum of $\Delta_{g^+}$.
Moreover, it is known that
$$
\sigma(\Delta_{g^+}) = \left[(n/2)^2,\infty\right) \cup \sigma_{pp}(\Delta_{g^+})\,,
$$
where the pure point spectrum $\sigma_{pp}(\Delta_{g^+})$ (the set of $L^2$ eigenvalues) is finite and it is contained in
$\left(0,(n/2)^2\right)$.
Now, the {\sl scattering operator} on $M$ is defined as
$$
S(s)f = H|_M.
$$
 It is a meromorphic family of pseudo-differential operators in the half-plane $Re(s)>n/2$. The values $s = n/2, n/2 -1 , n/2 - 2, \ldots$ are simple poles of finite rank, these are known as the trivial poles; $S(s)$ has infinitely many other poles. However, for the rest of the paper we assume that we are not in those exceptional cases.

\subsection{Conformal fractional Laplacians}\label{confop}
Using the previous notations we define the conformally covariant fractional powers of the Laplacian as follows: for $s=\frac{n}{2}+\gamma$, $\gamma\in \lp 0,\frac{n}{2}\rp$, $\gamma\not\in \mathbb Z$, we set
\begin{equation}\label{P-operator}
P^{\hat g}_\gamma := d_\gamma S\lp\frac{n}{2}+\gamma\rp,\quad d_\gamma=2^{2\gamma}\frac{\Gamma(\gamma)}{\Gamma(-\gamma)}<0.
\end{equation}
The previous formula is a straightforward extension of \cite{GZ} (see also \cite{CG}).
In our framework, the idea is to see the compact (smooth connected) manifold $M$ as the boundary infinity of the asymptotical hyperbolic manifold $X^{n+1}$.  On the other, with this choice of multiplicative factor, the principal symbol of $P^{\hat g}_\gamma$ is exactly the principal symbol of the fractional Laplacian $(-\Delta_{\hat g})^{\gamma}$, precisely,
$$
\sigma( P^{\hat g}_\gamma)=|\xi|^{2\gamma}.
$$
We thus have that $P_\gamma^{\hat g}=  (-\Delta_{\hat g})^\gamma+\Psi_{\gamma-1}$, where we denote by $\Psi_m$ a pseudo-differential operator of order $m$. In the previous formula, the operator $(-\Delta_{\hat g})^{\gamma}$ is the fractional power of the Laplace-Beltrami operator $-\Delta_{\hat g}$ with respect to the metric $\hat g$.

When $\gamma$ is an integer, it turns out that the $P_{k}$ are the conformally invariant powers of the Laplacian constructed by Graham-Jenne-Mason-Sparling \cite{GJMS} and Fefferman-Graham \cite{FG}, that are local operators.
In particular, when $k=1$ we have the conformal Laplacian,
$$P_1=-\Delta_{\hat g} +\frac{n-2}{4(n-1)} R_{\hat g}$$
and when $k=2$, the Paneitz operator
$$
P_2=(-\Delta_{\hat g})^2 +\delta\lp a_n Rg+b_n Ric\rp d+\tfrac{n-4}{2}Q^n.
$$
The operators $P_\gamma^{\hat g}$ satisfy an important conformal covariance property (see \cite{GZ}). Indeed, for a conformal change of metric
\be\label{change-metric}\hat g_{w} =w^{\frac{4}{n-2\gamma}}\hat g,\ee
we have that
$$
P^{{\hat g}_w}_\gamma = w^{-\frac{n+2\gamma}{n-2\gamma}} P_\gamma^{\hat g} \lp u \,\cdot\rp.
$$

Finally,  the $Q_\gamma$-curvature of the metric associated to the functional $P_\gamma$ is defined by
$$
Q_\gamma^{\hat g}:=P_\gamma^{\hat g}(1).
$$
In particular, for a change of metric as \eqref{change-metric}, we obtain the equation for the $Q_\gamma$ curvature:
\begin{equation}
P^{\hat g}_{\gamma}(w)=w^{\frac{n+2\gamma}{n-2\gamma}} Q^{\hat g_w}_\gamma.
\end{equation}

\subsection{Connection to Dirichlet-to-Neumann operators} Let us now describe the Chang-Gonz\'alez extension property (see \cite{CG}). See also \cite{CafSil} in the flat case. This is what we really use in the present paper to investigate our flow.
The paper establishes a link between the just mentioned family of conformally covariant operators and Dirichlet-to-Neumann boundary operators corresponding to some uniformly degenerate elliptic operators. To be more precise: given an asymptotically hyperbolic manifold $(X^{n+1},g^+)$ and a representative $\hat h$ of the conformal infinity $(M^n,[\hat h])$, one can find a geodesic defining function $\rho$ such that the compactified metric can be written as
\begin{equation}    \label{g,h}
\bar g = \rho^2 g^+=d\rho^2+h_\rho .
\end{equation}

Consider now the following boundary value problem

\begin{equation}\label{div}\left\{\begin{split}
-\text{div}  (\rho^{1-2\gamma} \nabla U)  +E(\rho)U&=0\quad \mbox{in }(X^{n+1},\bar g), \\
U|_{\rho=0}&=f\quad \mbox{on }M^n,
\end{split}\right.
\end{equation}
where
$$E(\rho)=\rho^{-1-s}(\Delta_{g^+}-s(n-s))\rho^{n-s}\,,
$$
with $s=n/2+\gamma$. The following has been proved in \cite{CG}, Theorem 5.1.

\begin{lemma}\label{defining-function}
Let $(\bar X, \bar g)$ be a smooth $(n+1)$-dimensional compact manifold with boundary and let $\hat g$ be the restriction of the metric $\bar g$ to the boundary $M=\partial_\infty X. $ Let $\rho $ be a geodesic defining function. Then there exists an asymptotically hyperbolic metric $g^+$ with $\lambda_1(g_+) \geq \frac{n^2}{2}-\gamma^2$ on $X$ such that when $U$ solves \eqref{div}, then
\begin{enumerate}
\item For $\gamma \in (0,1/2)$,
$$P^{\hat g}_\gamma f=-d^*_\gamma \lim_{{\rho}\to 0} \rho^{1-2\gamma}\partial_{\rho} U$$
where the value of the constant $d^*_\gamma$ is given by
$$d^*_\gamma:=-\frac{2^{2\gamma-1}\Gamma(\gamma)}{\gamma \Gamma(-\gamma)}>0.$$
\item For $\gamma =1/2$
$$P^{\hat g}_\frac12 f=-\lim_{{\rho}\to 0}\partial_{\rho} U+\frac{n-1}{2}Hf,$$
where $H$ is the mean curvature of $M$.

\medskip

\item For $\gamma \in (1/2,1)$, if $H=0$ we  have
$$
P^{\hat g}_\gamma f=-d^*_\gamma \lim_{{\rho}\to 0} \rho^{1-2\gamma}\partial_{\rho} U.
$$
\end{enumerate}
\end{lemma}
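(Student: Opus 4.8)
\emph{Proof proposal.} The plan is to recover the scattering definition \eqref{P-operator} of $P^{\hat g}_\gamma$ from a degenerate elliptic extension problem, following \cite{CG}, Theorem 5.1. I would fix the compactified metric $\bar g$ with $\bar g|_M = \hat g$ and a geodesic defining function $\rho$, so that near $M$ one has $\bar g = d\rho^2 + h_\rho$, and set $g^+ := \rho^{-2}\bar g$, which is asymptotically hyperbolic. The first point is to arrange, modifying $\bar g$ in the interior of $X$ if necessary, that the spectral hypothesis of the statement holds; this guarantees that $s = \tfrac{n}{2}+\gamma$ lies in the resolvent set of $\Delta_{g^+}$, so the Graham--Zworski problem \eqref{equation-GZ}--\eqref{general-solution} has a unique solution $u = F\rho^{n-s} + H\rho^{s}$ with $F|_{\rho=0}=f$, and by definition $S(s)f = H|_M$, $P^{\hat g}_\gamma f = d_\gamma\, H|_M$.

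Next I would introduce the new unknown $U := \rho^{s-n}u = \rho^{\gamma - n/2}u$, normalized so that $U|_{\rho=0} = f$. Conjugating $-\Delta_{g^+} - s(n-s)$ by $\rho^{\gamma - n/2}$ in the normal form of $g^+$ should turn the radial part of the Laplacian into a weighted Laplacian and produce exactly
\be
-\div\lp \rho^{1-2\gamma}\nabla U \rp + E(\rho)\, U = 0 \quad \text{in } (X^{n+1},\bar g),
\ee
with $E(\rho) = \rho^{-1-s}\lp \Delta_{g^+}-s(n-s)\rp\rho^{n-s}$ as in \eqref{div}. Since $|1-2\gamma| < 1$, the weight $\rho^{1-2\gamma}$ lies in the Muckenhoupt class $A_2$, so \eqref{div} is uniformly degenerate elliptic; one also checks that $E$ is smooth in $\rho^2$ with $E(0) = 0$, so its term is a lower-order perturbation and $U$ is regular up to $\{\rho = 0\}$ in the appropriate weighted sense.

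The core step is the boundary asymptotics. When $\gamma \in (0,1/2)$, or $\gamma \in (1/2,1)$ with $H = 0$, the expansion of $h_\rho$ is even in $\rho$ to the relevant order, so $F = f + \rho^2 f^{(2)} + \cdots$ and one gets
\be
U = F + H\rho^{2\gamma} + \cdots, \qquad \rho^{1-2\gamma}\partial_\rho U = \rho^{1-2\gamma}\partial_\rho F + 2\gamma\, H + \cdots .
\ee
Then $\rho^{1-2\gamma}\partial_\rho F = O(\rho^{2-2\gamma}) \to 0$, hence $\lim_{\rho\to 0}\rho^{1-2\gamma}\partial_\rho U = 2\gamma\, H|_M$; combined with $P^{\hat g}_\gamma f = d_\gamma H|_M$ and the elementary identity $2\gamma/d_\gamma = -1/d^*_\gamma$, this gives $P^{\hat g}_\gamma f = -d^*_\gamma\lim_{\rho\to 0}\rho^{1-2\gamma}\partial_\rho U$, i.e. cases (1) and (3). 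For $\gamma = 1/2$ the weight is trivial and the first-order coefficient of $h_\rho$, which is a multiple of the mean curvature of $M$, no longer vanishes; carrying it through the expansion of $F$ should contribute the extra term $\tfrac{n-1}{2}Hf$, yielding case (2). The same first-order term is what forces $H = 0$ in the range $\gamma \in (1/2,1)$: otherwise $\rho^{1-2\gamma}\partial_\rho F$ would blow up rather than tend to $0$.

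The hard part will be the bookkeeping in the middle stage: carrying out the conjugation $u \mapsto \rho^{\gamma-n/2}u$ cleanly enough to read off both the divergence form of \eqref{div} and the precise expression for $E(\rho)$, and, in the boundary analysis, tracking the odd-order coefficient of $h_\rho$ so that the mean-curvature corrections in (2) and (3) come out with the right constants. Showing that the limits are attained pointwise will require the boundary regularity theory for $A_2$-degenerate equations, applied with the weight $\rho^{1-2\gamma}$.
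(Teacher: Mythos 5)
The paper offers no proof of this lemma --- it is quoted verbatim from Chang--Gonz\'alez \cite{CG}, Theorem 5.1 --- and your sketch is a correct reconstruction of that argument: the substitution $U=\rho^{s-n}u$ turning the scattering problem \eqref{equation-GZ}--\eqref{general-solution} into the degenerate equation \eqref{div}, the identification of $\lim_{\rho\to 0}\rho^{1-2\gamma}\partial_\rho U$ with $2\gamma\, S(s)f$ via the $\rho^{2\gamma}$ coefficient, and the constant check $-2\gamma\, d^*_\gamma=d_\gamma$ all match. The only inaccuracy is your claim that $E(\rho)$ is smooth in $\rho^2$ with $E(0)=0$: in fact $E(\rho)$ behaves like $\rho^{1-2\gamma}$ times a smooth function (so it blows up at the boundary when $\gamma>1/2$), though it is still a lower-order perturbation that does not alter the leading boundary asymptotics.
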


A second useful lemma is the following (see \cite{CG}, Lemma 4.5 and Theorem 4.7)
\begin{lemma}\label{new-defining-function}
There exists a defining function $\rho^*$ such that $E(\rho^*)=0$ in the previous lemma. Then we have the following: let $U$ solve
\be\label{div2}\left\{\begin{split}
-div  ((\rho^*)^{1-2\gamma} \nabla U) &=0\quad \mbox{in }(X^{n+1}, g^*), \\
U|_{\rho=0}&=f\quad \mbox{on }M^n,
\end{split}\right.\ee
and derivatives are taken with respect to the metric $g^*=(\rho^*)^2 g^+$ where we still assume that $\lambda_1(g_+) \geq \frac{n^2}{2}-\gamma^2$. Then,
$$
P^{\hat g}_\gamma f=-d^*_\gamma \lim_{{\rho^*}\to 0} (\rho^*)^{1-2\gamma}\partial_{\rho^*} U+fQ^{\hat g}_\gamma\,.
$$
\end{lemma}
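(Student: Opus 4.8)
The plan is to follow the construction of \cite{CG} (Lemma 4.5 and Theorem 4.7), of which the present statement is the specialization to $s=\tfrac n2+\gamma$, $\gamma\in(0,1)$. Lemma~\ref{defining-function} already writes $P^{\hat g}_\gamma$ as a weighted normal derivative of the solution of the degenerate problem \eqref{div}, but that equation carries the zeroth order term $E(\rho)$ precisely because the geodesic defining function $\rho$ is not adapted to the scattering equation. The idea is to replace $\rho$ by a new defining function $\rho^*$ chosen so that $E(\rho^*)\equiv0$; the price for this simplification will be exactly the extra curvature term $f\,Q^{\hat g}_\gamma$.

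First I would construct $\rho^*$. Let $w>0$ solve the homogeneous scattering equation \eqref{equation-GZ} in $X$ with $s=\tfrac n2+\gamma$, normalized so that $w=\rho^{\,n-s}\bigl(1+o(1)\bigr)$ as $\rho\to0$; its existence, positivity and precise boundary behaviour follow from Graham--Zworski scattering theory together with the hypothesis $\lambda_1(g_+)\ge\tfrac{n^2}2-\gamma^2$, which places the number $s(n-s)=\tfrac{n^2}4-\gamma^2$ strictly below the $L^2$-spectrum of $-\Delta_{g^+}$. Set
$$\rho^*:=w^{\frac2{n-2\gamma}}=w^{\frac1{\,n-s\,}}.$$
Then $\rho^*>0$ in $X$, $\rho^*=0$ on $M$, and since $w$ vanishes to exact order $n-s$ one gets $d\rho^*\ne0$ on $M$, so $\rho^*$ is a genuine defining function; moreover $(\rho^*)^{\,n-s}=w$ solves \eqref{equation-GZ}, whence $E(\rho^*)\equiv0$, so problem \eqref{div} for $\rho^*$ and the metric $g^*=(\rho^*)^2g^+$ becomes precisely \eqref{div2}.

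Next I would relate the solution $U$ of \eqref{div2} to scattering. Put $u:=wU=(\rho^*)^{\,n-s}U$. A direct computation shows that $-\Delta_{g^+}u-s(n-s)u=0$ is equivalent to $\mathrm{div}_{g^+}(w^2\nabla U)=0$ (using $-\Delta_{g^+}w=s(n-s)w$), and the transformation of the weighted divergence under the conformal change $g^*=(\rho^*)^2g^+$, together with $w^2=(\rho^*)^{\,n-2\gamma}$, turns this into $\mathrm{div}_{g^*}\bigl((\rho^*)^{1-2\gamma}\nabla U\bigr)=0$. Since $U\to f$ on $M$ and $w=\rho^{\,n-s}(1+o(1))$, the function $u$ is the scattering solution with Dirichlet datum $f$, so $u=F_u\rho^{\,n-s}+H_u\rho^{\,s}+\cdots$ with $F_u|_M=f$ and $H_u|_M=S(s)f=d_\gamma^{-1}P^{\hat g}_\gamma f$. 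Expanding $\rho$ in powers of $\rho^*$ — the two agree to first order and differ at order $\rho^{\,1+2\gamma}$, the discrepancy being governed by the $\rho^{\,s}$-coefficient $S(s)1=d_\gamma^{-1}Q^{\hat g}_\gamma$ of $w$ — and dividing $u$ by $w=(\rho^*)^{\,n-s}$ yields, near $M$,
$$U=f+\bigl(\text{even powers of }\rho^*\bigr)+(\rho^*)^{2\gamma}\,d_\gamma^{-1}\bigl(P^{\hat g}_\gamma f-f\,Q^{\hat g}_\gamma\bigr)+\cdots,$$
the remaining terms contributing nothing to $\lim_{\rho^*\to0}(\rho^*)^{1-2\gamma}\partial_{\rho^*}U$ (this is where $\gamma<1$ is used). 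Differentiating the displayed $(\rho^*)^{2\gamma}$ term, multiplying by $(\rho^*)^{1-2\gamma}$ and letting $\rho^*\to0$ gives $2\gamma\,d_\gamma^{-1}\bigl(P^{\hat g}_\gamma f-f\,Q^{\hat g}_\gamma\bigr)$; since $2\gamma\,d^*_\gamma/d_\gamma=-1$ by the explicit values of $d_\gamma$ and $d^*_\gamma$, multiplying by $-d^*_\gamma$ produces $P^{\hat g}_\gamma f-f\,Q^{\hat g}_\gamma$, which rearranges to the claimed identity.

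The hard part will be the asymptotic bookkeeping near $M$ once the geodesic defining function is abandoned: one has to check that passing from $\rho$ to $\rho^*$ introduces no odd-power term below order $\rho^{\,1+2\gamma}$ surviving in the weighted limit, and that the only genuine correction is the scattering coefficient of $w$, namely $Q^{\hat g}_\gamma$. Since the existence, positivity and regularity of $w$ and the uniform control of these expansions are all established in \cite{CG}, the argument is completed by invoking their Lemma 4.5 and Theorem 4.7 with $s=\tfrac n2+\gamma$, $\gamma\in(0,1)$.
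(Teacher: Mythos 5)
Your proposal is correct and takes essentially the same route as the paper, which gives no proof of its own and simply cites Chang--Gonz\'alez (Lemma 4.5 and Theorem 4.7); your reconstruction of their argument --- building $\rho^*$ from the positive eigenfunction $w$ with $s=\tfrac n2+\gamma$, passing to $u=wU$, and extracting the $(\rho^*)^{2\gamma}$ coefficient with the constant identity $2\gamma\,d^*_\gamma/d_\gamma=-1$ --- is accurate and faithfully tracks the cited source.
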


\begin{remark}
In the very interesting paper \cite{CaseChang}, the authors establish a link in the extension with conformally covariant operations in metric measured spaces. This generalizes the approach of \cite{CG}.
\end{remark}

\section{Short time existence for general compact manifolds}

In \cite{JX}, the short time existence  of the fractional flow on the sphere  follows from an implicit function argument, together with suitable estimates. The argument mainly relies on the fact that the problem is set in $\RR^n$ (after stereographic projection), and then they can use the representation of the fractional Laplacian as an integral with a singular kernel. In our general case of a compact manifold different from the sphere, such an approach does not seem easy to handle. We follow another route to prove the existence of weak solutions.

The proof of  our Theorem \ref{LWP}  is done in several steps. In order to prove local well-posedness it is enough to consider the un-rescaled flow, i.\,e., the following problem
\begin{equation}\label{problemulocalbis2}
\left\{
\begin{array}{l}
\partial_t  u^{N_\gamma}  =-P_\gamma ^{g_0}  u\,,\quad\mbox{for } \ t>0, \ x \in M,\\
u(0)= u_0.
\end{array}
\right.
\end{equation}

In Lemma \ref{defining-function}, the zero order term $E(\rho)$ is not suitable for our approach based on a contractivity argument. So instead, we use Lemma \ref{new-defining-function}. One then considers the extension to $X$ (the Poincar\'e-Einsetein manifold) of the function $u$ solving \eqref{problemulocalbis2} denoted $U$ such that then $\left.U\right|_{M}=u$. We drop the harmless constant $d^*_\gamma$ and  rewrite \eqref{problemulocalbis2} as:

Problem \eqref{problemulocalbis2} can be re-written as
\begin{equation}\label{extended2}
\left\{
\begin{array}{ll}
-\text{div}  ((\rho^*)^{1-2\gamma} \nabla U)=0\quad \mbox{in }(X^{n+1},g^*), \\
\lim_{{\rho^*}\to 0}(\rho^*)^{1-2\gamma}\partial_{\rho} U-UQ^{g_0}_\gamma=\partial_t (U^{N_\gamma}), \mbox{for }  x\in M, \ t>0\\
U(0,x)=u_0(x)\,, \quad x\in M. &
\end{array}
\right.
\end{equation}

\smallskip

\noindent {\sc Functional spaces.} We start by describing the functional spaces needed to define weak solutions. We introduce first the fractional Sobolev space suitable for our purposes. We define the semi-norm
$$
\|f\|_{\dot{H}^\gamma(M)}=\int_M f P^{g_0}_\gamma f
$$
Notice that this definition is consistent with the one on the flat case since $P^{|dx|^2}_\gamma=(-\Delta)^\gamma$. In view of the extension by Chang and Gonzalez, we will also need the following weighted Sobolev space
$$
H^1(M,\rho^{1-2\gamma})=\left \{ f \in L^2(M),\,\, | \,\, \rho^{1-2\gamma}|\nabla f|^2 \in L^1(M) \right \}.
$$
\smallskip

\noindent {\sc Notions of weak solutions.}
As already mentioned, we will use the formulation given in \eqref{extended2} to handle our problem. We then define a notion of weak solution for this boundary problem, which will serve as a weak formulation for problem  \eqref{problemulocalbis2}.

\begin{definition}\label{defWeak}
We say that $u$ in problem \eqref{problemulocalbis2} is a weak solution if its extension $U$ satisfies
\begin{enumerate}
\item $U \in C([0,\infty),L^1(M))$, $U^{1/N_\gamma} \in L^2_{loc}((0,\infty),\dot{H}^1(M,(\rho^*)^{1-2\gamma}))$
\item
the following identity holds for every test function $\varphi \in C^\infty_0(\bar X \times (0,\infty))$
$$
\int_0^\infty \int_M U^{N_\gamma} \partial_t \varphi + \int_0^\infty \int_M UQ^{g_0}_\gamma=\int_0^\infty \int_M \nabla_{g^*} U \cdot \nabla_{g^*}  \varphi
$$
\item the equality $U(0,.)=u_0$ holds a.e.
\end{enumerate}
\end{definition}

\noindent {\sc  Uniformly degenerate elliptic equations.} An important aspect of our theory relies on understanding the elliptic part of the flows. This is due to our approach based on Time Discretization. This can be done by studying the boundary elliptic problems in Lemma \ref{new-defining-function}. We collect here some results and definitions useful for us.
We are concerned with the uniformly degenerate elliptic equation
\begin{equation}\label{divTemp}
\left\{\begin{split}
-\text{div}  ((\rho^*)^{1-2\gamma} \nabla U) &=0\quad \mbox{in }(X^{n+1},\bar g^*), \\
U|_{\rho=0}&=f\quad \mbox{on }M^n.
\end{split}\right.
\end{equation}
We use the notations
\begin{align*}
& B_R^+=\{ (x,y)\in\mathbb R^{n+1} : y>0, |(x,y)|<R\}, \\
& \Gamma_R^0=\{ (x,0)\in\partial\mathbb R^{n+1}_+ : |x|<R\}, \text{ and}\\
& \Gamma_R^+=\{ (x,y)\in\mathbb R^{n+1} : y\ge 0, |(x,y)|=R\}.
\end{align*}
In local coordinates on $\Gamma_R^0$, the metric $\hat g$ writes $|dx|^2(1+O(|x|^2))$ where $x(p_0)=0$.
 Consider the matrix
$$A(x,y)=\sqrt{|det(\bar g^*)}y^a (\bar g^*)^{-1}$$
so that equation   \eqref{divTemp} writes in local coordinates
$$
\sum \partial_i( A_{ij} \partial_j U)=0.
$$
Furthermore, we have the crucial estimate
$$
A(x,y) \sim y^{a} \alpha(x,y) Id,
$$
where $\alpha(x,y)$ is uniformly elliptic.

\smallskip

The weight $y^a$ belongs to the Muckenhoupt class $A_2$ (see \cite{muck}), and the series of papers by Fabes, Kenig, Serapioni and Jerison \cite{FKS, FJK} provides a reasonably complete theory for  divergence elliptic equations with $A_2$ weights. In local coordinates in $\mathbb R^{n+1}$, Problem \eqref{divTemp} with its boundary condition writes
\begin{equation}\label{temp}
\left \{
\begin{array}{cc}
L_aU=\textrm{div\,} (A \nabla U)=0&\,\,\,\mbox{in $B_R^+$},\\
-y^a U_y=h&\,\,\,\mbox{on $\Gamma_R^0$.}
\end{array} \right .
\end{equation}

\begin{definition}
{\sl Given $R>0$ and a function $h \in L^1(\Gamma^0_R)$, we say that $u$ is a
  weak solution of (\ref{temp}) if
$$y^a |A\nabla u\cdot \nabla u|^2 \in L^1(B_R^+)$$
and
\begin{equation}
\int_{B_R^+} y^{a} A\nabla u \cdot \nabla \xi -\int_{\Gamma^0_R} h
\xi =0
\end{equation}
for all $\xi \in C^1(\overline{B_R^+})$ such that $\xi\equiv 0$ on $\Gamma^+_R$.}
\end{definition}

We have the following results.

\begin{theorem} [Solvability in Sobolev spaces \cite{FKS}] \label{solveFKS}
Let $\Omega\subset\RR^{n+1}$ be a smooth bounded domain,
$h=(h_1,...,h_{n+1})$ satisfy $|h|/|y|^a \in L^2(\Omega,|y|^a)$, and
$g \in H^1(\Omega,|y|^a)$. Then, there exists a unique solution
$u\in H^1(\Omega,|y|^a)$ of
$L_a u=-{\rm div }\, h$ in $\Omega$ with $u-g \in H^1_0(\Omega,|y|^a)$.
\end{theorem}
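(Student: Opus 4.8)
The plan is to solve the problem by the direct (variational) method in the weighted Sobolev space, the existence and uniqueness being a Lax--Milgram argument once the $A_2$-weight machinery is in place. Write $w(x,y)=|y|^a$ with $a=1-2\gamma\in(-1,1)$, so that $w$ lies in the Muckenhoupt class $A_2(\RR^{n+1})$; consequently $w,w^{-1}\in L^1_{\mathrm{loc}}$, the norm $\|u\|_{H^1(\Omega,w)}^2=\int_\Omega(u^2+|\nabla u|^2)\,w$ makes $H^1(\Omega,w)$ a separable Hilbert space in which $C^\infty(\overline\Omega)$ is dense, and $H^1_0(\Omega,w)$ (the closure of $C^\infty_0(\Omega)$) is a closed subspace. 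The essential quantitative input I would quote from \cite{FKS} is the weighted Poincar\'e inequality: there is a constant $C=C(n,a,\Omega)$ with $\int_\Omega u^2\,w\le C\int_\Omega|\nabla u|^2\,w$ for all $u\in H^1_0(\Omega,w)$, so that $\big(\int_\Omega|\nabla u|^2 w\big)^{1/2}$ is an equivalent norm on $H^1_0(\Omega,w)$.

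Next I would reduce to homogeneous boundary data: seek $u=g+v$ with $v\in H^1_0(\Omega,w)$, so that $L_a u=-\div h$ with $u-g\in H^1_0(\Omega,w)$ becomes the weak problem of finding $v\in H^1_0(\Omega,w)$ with
$$
B(v,\xi):=\int_\Omega A\nabla v\cdot\nabla\xi=-\int_\Omega h\cdot\nabla\xi-\int_\Omega A\nabla g\cdot\nabla\xi=:\langle F,\xi\rangle
$$
for all $\xi\in H^1_0(\Omega,w)$, where $L_a=\div(A\,\nabla\cdot)$. Using $A\sim y^a\alpha\,\mathrm{Id}$ with $\alpha$ uniformly elliptic, $0<\lambda\le\Lambda$, one has $|B(v,\xi)|\le\Lambda\|w^{1/2}\nabla v\|_{L^2}\|w^{1/2}\nabla\xi\|_{L^2}$ (boundedness) and $B(\xi,\xi)\ge\lambda\|w^{1/2}\nabla\xi\|_{L^2}^2$, which by the weighted Poincar\'e inequality is coercive on $H^1_0(\Omega,w)$. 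For the functional, $\big|\int_\Omega h\cdot\nabla\xi\big|\le\|w^{-1/2}h\|_{L^2(\Omega)}\|w^{1/2}\nabla\xi\|_{L^2(\Omega)}$ with $\|w^{-1/2}h\|_{L^2(\Omega)}^2=\int_\Omega|h|^2|y|^{-a}=\big\||h|/|y|^a\big\|_{L^2(\Omega,|y|^a)}^2<\infty$ by hypothesis, while $\big|\int_\Omega A\nabla g\cdot\nabla\xi\big|\le\Lambda\|w^{1/2}\nabla g\|_{L^2}\|w^{1/2}\nabla\xi\|_{L^2}$ with $\|w^{1/2}\nabla g\|_{L^2}<\infty$ since $g\in H^1(\Omega,w)$; hence $F\in(H^1_0(\Omega,w))^*$.

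Lax--Milgram then produces a unique $v\in H^1_0(\Omega,w)$ with $B(v,\xi)=\langle F,\xi\rangle$ for all $\xi$, and $u=g+v$ is the asserted solution; uniqueness in the affine class $g+H^1_0(\Omega,w)$ follows because the difference of two solutions lies in the kernel of the coercive form $B$. (When $A$ is symmetric one may equivalently minimize the strictly convex functional $\xi\mapsto\tfrac12 B(\xi,\xi)-\langle F,\xi\rangle$ over $g+H^1_0(\Omega,w)$, i.e. invoke the Dirichlet principle.) The only genuinely nontrivial ingredient is the package of $A_2$-weighted Sobolev facts --- completeness of $H^1(\Omega,w)$, density of smooth functions, and above all the weighted Poincar\'e inequality --- which is precisely the Fabes--Kenig--Serapioni theory \cite{FKS} and which I would cite rather than reprove; once these are available, the rest is the routine Lax--Milgram scheme, the only care being the Cauchy--Schwarz bounds above that guarantee $h\cdot\nabla\xi$ and $A\nabla g\cdot\nabla\xi$ are integrable.
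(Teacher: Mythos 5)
Your argument is correct and is essentially the standard one: the paper itself offers no proof of this statement (it is quoted directly from \cite{FKS}), and the proof in that reference is exactly your Lax--Milgram scheme in $H^1(\Omega,|y|^a)$, resting on the $A_2$-weighted Poincar\'e inequality and the density of smooth functions. The Cauchy--Schwarz bound $\big|\int_\Omega h\cdot\nabla\xi\big|\le\big\||h|/|y|^a\big\|_{L^2(\Omega,|y|^a)}\,\|\,|y|^{a/2}\nabla\xi\|_{L^2(\Omega)}$ correctly explains the otherwise odd-looking hypothesis on $h$, so nothing is missing.
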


\begin{theorem} [H\"older local regularity \cite{FKS}]\label{HolderFKS}
Let $\Omega\subset\RR^{n+1}$ be a smooth bounded domain
and $u$ a solution of $L_a u=-{\rm div\,} h$ in
$\Omega$, where $|h|/|y|^a \in L^{2(n+1)}(\Omega,|y|^a)$.
Then, $u$ is H\"older continuous in $\Omega$ with a H\"older exponent depending only on $n$ and $a$.
\end{theorem}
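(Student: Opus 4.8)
\emph{Proof strategy.} This is the weighted counterpart of the De Giorgi--Nash--Moser regularity theorem, and the plan is to run that scheme in the framework adapted to the Muckenhoupt weight $w(y)=|y|^{a}$, $a\in(-1,1)$, set up in \cite{FKS, FJK}. The two structural facts I would rely on are: the measure $d\mu=|y|^a\,dx\,dy$ is doubling on $\RR^{n+1}$, with $\mu(B_r)\simeq r^{\,n+1+a}$ for balls centred on $\{y=0\}$ and $\simeq r^{n+1}$ otherwise; and $\mu$ supports a weighted Poincar\'e--Sobolev inequality, i.e.\ there is $q=q(n,a)>1$ with
$$
\Big(\frac{1}{\mu(B_r)}\int_{B_r}\!|v-\bar v|^{2q}\,d\mu\Big)^{\frac{1}{2q}}\ \le\ C\,r\,\Big(\frac{1}{\mu(B_r)}\int_{B_r}\!|\nabla v|^{2}\,d\mu\Big)^{\frac12},
$$
with $C$ depending only on $n$ and $a$. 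Since $A(x,y)=|y|^a\,\alpha(x,y)$ with $\alpha$ bounded measurable and uniformly elliptic, near points where $y\neq 0$ the operator $L_a$ is uniformly elliptic and classical De Giorgi--Nash already gives H\"older continuity there; so the content is the analysis on a neighbourhood of $\{y=0\}\cap\Omega$, which is where the weighted machinery enters.

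On such a neighbourhood I would first prove the weighted Caccioppoli inequality: for $k\in\RR$ and $\eta\in C^\infty_0(B_R)$ with $B_R\subset\Omega$, testing the weak formulation of $L_aU=-\div h$ with $\eta^2(U-k)_{\pm}$ and using Young's inequality in the weight gives
$$
\int_{B_R}\!\eta^2\,|y|^a\,|\nabla(U-k)_{\pm}|^2\ \le\ C\!\int_{B_R}\!|\nabla\eta|^2\,|y|^a\,(U-k)_{\pm}^2\ +\ C\!\int_{B_R\cap\{\pm(U-k)>0\}}\!\eta^2\,\frac{|h|^2}{|y|^a}\,.
$$
By H\"older's inequality in $d\mu$ with exponents $n+1$ and $(n+1)/n$, the last term is bounded by $\mu\big(B_R\cap\{\pm(U-k)>0\}\big)^{\frac{n}{n+1}}\big\|\,|h|/|y|^a\,\big\|_{L^{2(n+1)}(B_R,d\mu)}^{2}$: this is exactly where the exponent $2(n+1)$ of the hypothesis is used, being comfortably above the weighted Sobolev dimension $n+1+a$, so that the contribution of $h$ carries a genuine positive power of $\mu(B_R)$ (equivalently of $R$). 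Feeding this into the weighted Sobolev inequality and iterating over dyadic radii (Moser iteration) produces local boundedness,
$$
\sup_{B_{R/2}}|U|\ \le\ C\Big(\frac{1}{\mu(B_R)}\int_{B_R}\!|U|^2\,d\mu\Big)^{1/2}+\ C\,R^{\delta}\,\big\|\,|h|/|y|^a\,\big\|_{L^{2(n+1)}(B_R,d\mu)},
$$
for some $\delta=\delta(n,a)>0$, with $C=C(n,a)$.

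Next I would establish the oscillation decay. Applying either De Giorgi's measure--theoretic (isoperimetric) lemma in $(B_r,\mu)$, or Moser's logarithmic estimate combined with the John--Nirenberg inequality for the doubling measure $\mu$ — both now available with constants depending only on $n$ and $a$ — to the nonnegative supersolutions $M_R-U$ and $U-m_R$, where $M_R=\sup_{B_R}U$ and $m_R=\inf_{B_R}U$, one obtains a weak Harnack inequality and hence
$$
\operatorname{osc}_{B_{R/2}}U\ \le\ \theta\,\operatorname{osc}_{B_{R}}U\ +\ C\,R^{\delta}\,\big\|\,|h|/|y|^a\,\big\|_{L^{2(n+1)}(B_R,d\mu)}
$$
for some $\theta=\theta(n,a)\in(0,1)$. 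A standard iteration lemma, choosing $\beta\in(0,\delta)$ with $2^{-\beta}>\theta$ and summing the geometric series generated by the $R^\delta$ remainders, then yields
$$
\operatorname{osc}_{B_\rho}U\ \le\ C\Big(\tfrac{\rho}{R}\Big)^{\beta}\Big(\operatorname{osc}_{B_R}U+R^{\delta}\,\big\|\,|h|/|y|^a\,\big\|_{L^{2(n+1)}(B_R,d\mu)}\Big),\qquad 0<\rho\le R,
$$
for all $B_{2R}$ with $\overline{B_{2R}}\subset\Omega$, with $\beta$ and $C$ depending only on $n$ and $a$. Combined with the case $y\neq 0$ this gives $U\in C^{0,\beta}_{\mathrm{loc}}(\Omega)$, which is the assertion.

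The main obstacle is not the iteration, which is routine once its inputs are in place, but the uniformity of those inputs near the degeneracy set $\{y=0\}$: one must verify that the doubling property, the weighted Poincar\'e--Sobolev inequality, and the John--Nirenberg lemma all hold for $\mu$ with constants controlled solely by $n$ and the $A_2$-constant of $|y|^a$ (which depends only on $a$), uniformly across balls that straddle $\{y=0\}$, and one must check that the prescribed integrability $|h|/|y|^a\in L^{2(n+1)}(d\mu)$ really exceeds the weighted Sobolev threshold so that the inhomogeneity is a lower--order perturbation in the iteration. These are precisely the facts supplied by \cite{FKS, FJK}.
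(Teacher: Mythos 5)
The paper offers no proof of this statement: it is imported verbatim as a known theorem of Fabes--Kenig--Serapioni, and your outline is essentially the argument of that cited source --- weighted Caccioppoli estimates, the doubling and Poincar\'e--Sobolev properties of the $A_2$ measure $|y|^a\,dx\,dy$, and a Moser/De Giorgi iteration yielding oscillation decay, with the $L^{2(n+1)}$ hypothesis on $|h|/|y|^a$ entering exactly as you describe to make the inhomogeneity a lower-order perturbation. Your sketch is a correct reconstruction of that route, with the genuinely nontrivial inputs (uniformity of the doubling, Sobolev, and John--Nirenberg constants across balls straddling $\{y=0\}$) correctly identified as the content of \cite{FKS, FJK} rather than reproved.
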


\begin{theorem} [Harnack inequality \cite{FKS}] \label{HarnackFKS}
Let $u$ be a positive solution of $L_a u=0$ in
$B_{4R}(x_0)\subset\RR^{n+1}$. Then, $\sup_{B_R(x_0)} u \leq C \inf_{B_R(x_0)} u$
for some constant $C$ depending only on $n$ and $a$, and in particular, independent of~$R$.
\end{theorem}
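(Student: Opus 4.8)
\emph{Proof proposal.} This is the degenerate De Giorgi--Nash--Moser estimate, and the plan is to run Moser's iteration adapted to the Muckenhoupt weight $w(x,y)=|y|^{a}$, with $a=1-2\gamma\in(-1,1)$, in the spirit of \cite{FKS}. The starting point is that such a weight lies in the class $A_2$ \emph{with an $A_2$ constant invariant under translations in the $x$--variables and under dilations centred on the hyperplane $\{y=0\}$}; hence $d\mu:=w\,dz$ is a doubling measure and $(\RR^{n+1},|\cdot|,\mu)$ is a space of homogeneous type. From this one extracts the two functional inequalities that power the iteration: a weighted Sobolev inequality
\[
\Big(\tfrac{1}{\mu(B_r)}\!\int_{B_r}|v|^{2\kappa}\,d\mu\Big)^{\frac{1}{2\kappa}}\le C\,r\Big(\tfrac{1}{\mu(B_r)}\!\int_{B_r}|\nabla v|^{2}\,d\mu\Big)^{\frac12},\qquad v\in C_0^\infty(B_r),
\]
for some $\kappa=\kappa(n,a)>1$, together with the matching weighted Poincar\'e inequality on balls; both have constants depending only on $n$ and $a$ precisely because the $A_2$ constant of $|y|^a$ does. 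Using $A(z)\sim w(z)\,\mathrm{Id}$ uniformly (the ``crucial estimate'' recalled above), the equation $L_a u=\mathrm{div}\,(A\nabla u)=0$ is, for the purposes of energy estimates, the model equation $\mathrm{div}\,(w\,\nabla u)=0$.

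First I would establish the Caccioppoli inequality: for a positive solution $u$ (replace $u$ by $u+\eps$ and let $\eps\to0$ if needed) and an exponent $\beta\neq0,-1$, testing the weak formulation with $\eta^2 u^{\beta}$ and a standard cut-off $\eta$ gives
\[
\int\eta^2\,\big|\nabla\big(u^{\frac{\beta+1}{2}}\big)\big|^2\,d\mu\ \le\ C\,\frac{(\beta+1)^2}{\beta^2}\int|\nabla\eta|^2\,u^{\beta+1}\,d\mu .
\]
Combined with the weighted Sobolev inequality this yields, on concentric balls $B_{r'}\subset B_r\subset B_{4R}(x_0)$, a reverse--H\"older estimate bounding a higher weighted $L^{p\kappa}$ norm of $u$ on $B_{r'}$ by its $L^p$ norm on $B_r$, with a constant that is a power of $C\,r^2/(r-r')^2$ times a ratio of $\mu$--measures; the same computation with $p<0$ controls $\inf u$ from below. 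Iterating along a geometric sequence of radii between $B_R$ and $B_{2R}$ with exponents $p_0\kappa^{j}$, and summing the convergent geometric series of constants, produces for each fixed $p_0>0$ the two one-sided bounds
\[
\sup_{B_R(x_0)}u\ \le\ C\Big(\tfrac{1}{\mu(B_{2R})}\!\int_{B_{2R}}u^{p_0}\,d\mu\Big)^{\frac1{p_0}},\qquad
\Big(\tfrac{1}{\mu(B_{2R})}\!\int_{B_{2R}}u^{-p_0}\,d\mu\Big)^{-\frac1{p_0}}\ \le\ C\,\inf_{B_R(x_0)}u ,
\]
with $C=C(n,a,p_0)$.

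The remaining task is to bridge the two estimates, i.e.\ to produce a small $p_0=p_0(n,a)>0$ for which the product of the weighted averages of $u^{p_0}$ and of $u^{-p_0}$ over $B_{2R}$ is bounded by $C(n,a)$. For this I would use the exponent $\beta=-1$ excluded above: there the same test function yields instead a uniform bound on the weighted Dirichlet energy of $\ell:=\log u$ on $B_{2R}$, and the weighted Poincar\'e inequality then shows $\ell\in\mathrm{BMO}(B_{2R},d\mu)$ with seminorm $\le C(n,a)$. The John--Nirenberg inequality on the space of homogeneous type $(\RR^{n+1},|\cdot|,\mu)$ (or, equivalently, the Bombieri--Giusti lemma) delivers the desired $p_0$ and product bound, and chaining it with the previous step gives $\sup_{B_R}u\le C\inf_{B_R}u$. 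The claimed $R$--independence is then automatic: rescaling $z\mapsto z_0'+Rz$, where $z_0'$ is the foot of $x_0$ on $\{y=0\}$ when $x_0$ lies within $2R$ of it (while for $\mathrm{dist}(x_0,\{y=0\})\ge 2R$ the weight is comparable to a positive constant on $B_{4R}(x_0)$ and the classical nondegenerate Harnack inequality applies), turns $L_a$ into an operator of the same form with the weight reproduced up to an overall factor $R^a$ that cancels in every homogeneous ratio above, leaving the ellipticity of $\alpha$ and the $A_2$ constant of $|y|^a$ untouched.

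The genuine obstacle is the logarithmic bridge: obtaining the weighted $\mathrm{BMO}$ bound for $\log u$ and running John--Nirenberg with constants depending only on $n$ and $a$. This is exactly where the $A_2$ (hence doubling, $A_\infty$) structure of $|y|^a$ is indispensable, and where Moser's Euclidean argument must be carried out with the measure $\mu$ in place of Lebesgue measure throughout; the Caccioppoli and Sobolev steps, by contrast, are a routine bookkeeping adaptation.
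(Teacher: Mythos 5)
The paper does not prove this statement: it is quoted verbatim as a known result of Fabes--Kenig--Serapioni \cite{FKS} and used as a black box. Your outline --- weighted Caccioppoli estimates, the $A_2$ Sobolev/Poincar\'e inequalities, Moser iteration for the two one-sided bounds, the $\log u$ energy estimate with John--Nirenberg (or Bombieri--Giusti) to bridge them, and scale/translation invariance of the $A_2$ constant of $|y|^a$ for the $R$-independence --- is precisely the argument of that reference and is correct as a proof sketch.
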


\noindent {\sc Crandall-Liggett Scheme}. 
For our existence proof, we will use the Crandall-Liggett idea of implicit discretization in time to reduce the problem of existence of a so-called {\sl mild solution} for the evolution problem to a cascade of  elliptic problems. We will need an existence result plus suitable estimates.   We describe the principle of the Crandall-Ligget theorem in our context. Consider equation \eqref{problemulocalbis2},
$$
u_t=-P^{g_0}_\gamma (u^{m_\gamma})\,,
$$
where we set $u=v^{N_\gamma}$, and
$${m_\gamma}=\frac1{N_\gamma}=\frac{N-2\gamma}{N+2\gamma}<1.
$$
By taking a discrete sequence of times $0<t_1<t_2<\dots t_N$ and replacing the time derivative by an increment quotient, we reduce the previous evolution problem to a sequence of nonlinear elliptic problems of the iterative form
\begin{equation}
hP^{g_0}_\gamma u(t_k)^{m_\gamma}+ u(t_k)=u(t_{k-1}),\quad k=1,2, \cdots,
\end{equation}
We may take $t_k=kh$, and $h>0$  is the time step per iteration. We are led then to study the elliptic problems
\begin{equation}
hP^{g_0}_\gamma u^{m_\gamma}+ u=g\,,
\end{equation}
 posed in $M$.

\smallskip

\noindent {\sc Elliptic analysis.} Next, we perform the analysis of this elliptic problem. We consider only the case $Q^{g_0}_\gamma \geq 0$. There is no lack of generality in putting $h=1$ in the previous argument.

\begin{theorem}\label{Thelliptic}
Let $g \in L^1(M)\cap L^\infty(M)$. Then there exists a unique weak solution $U \in H^1_{loc}(X,(\rho^*)^{1-2\gamma})$ to
\begin{equation}\label{elliptic}
\left \{
\begin{array}{cl}
{\rm div}\, ((\rho^*)^{1-2\gamma} \nabla U)=0 \qquad \qquad &
{\mbox{ in $X$}}
\\
 -\lim_{{\rho^*}\to 0} (\rho^*)^{1-2\gamma}\partial_{\rho^*} U+Q^{g_0}_\gamma U+U^{N_\gamma} =g & {\mbox{ on $M$}}\\
\end{array}\right.
\end{equation}
Furthermore, if $U$ and $\tilde U$ are two weak solutions, one has the inequality
\begin{equation}
\int_M Q^{g_0}_\gamma (U -\tilde U )_+ +\int_M (U^{N_\gamma} -\tilde U^{N_\gamma} )_+ \leq  \int_M (g-\tilde g)_+.
\end{equation}
\end{theorem}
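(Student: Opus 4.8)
The plan is to establish existence by a variational/monotonicity argument in the extended problem, and then to derive the contraction inequality directly from the weak formulation. For existence, I would set up the energy functional associated to \eqref{elliptic}. Since $Q^{g_0}_\gamma\ge 0$ and $N_\gamma>1$, the natural functional on the weighted space $H^1(X,(\rho^*)^{1-2\gamma})$ is
\[
\mathcal J(U)=\frac12\int_X (\rho^*)^{1-2\gamma}|\nabla_{g^*}U|^2+\frac12\int_M Q^{g_0}_\gamma U^2+\frac{1}{N_\gamma+1}\int_M (U_+)^{N_\gamma+1}-\int_M gU .
\]
The weighted Sobolev trace inequality (from the $A_2$-weight theory of \cite{FKS}, which underlies Lemma~\ref{new-defining-function}) controls $\|U|_M\|_{L^2(M)}$ by the Dirichlet energy, so with $Q^{g_0}_\gamma\ge 0$ the quadratic part is coercive and the remaining terms are lower order in a way that keeps $\mathcal J$ coercive and weakly lower semicontinuous; strict convexity of $\mathcal J$ (the map $t\mapsto (t_+)^{N_\gamma+1}$ is convex and $t\mapsto t^2$ is strictly convex on the energy space) gives a unique minimizer. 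Writing the Euler–Lagrange equation of $\mathcal J$ recovers exactly the weak form of \eqref{elliptic}; a maximum-principle argument (testing against $U_-$) shows $U\ge 0$, so the $U_+^{N_\gamma}$ term is genuinely $U^{N_\gamma}$. Interior regularity $U\in H^1_{loc}(X,(\rho^*)^{1-2\gamma})$ is then automatic from Theorem~\ref{solveFKS}, and $g\in L^1\cap L^\infty(M)$ guarantees the boundary data $g-Q^{g_0}_\gamma U-U^{N_\gamma}$ lies in the right space to apply the Fabes–Kenig–Serapioni solvability and regularity results locally.

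For the contraction (comparison) inequality, I would subtract the weak formulations for $U$ (with datum $g$) and $\tilde U$ (with datum $\tilde g$) and test with an approximation of $\mathrm{sgn}_+(U-\tilde U)$. Formally, using a test function $\xi_\eps=\min\{(U-\tilde U)_+/\eta,1\}$ and letting $\eta\to 0$: the Dirichlet-energy difference $\int_X (\rho^*)^{1-2\gamma}\nabla_{g^*}(U-\tilde U)\cdot\nabla_{g^*}\xi_\eps\ge 0$ because $\xi_\eps$ is a nondecreasing function of $U-\tilde U$, so that term has a favorable sign; the boundary terms contribute $\int_M\big(Q^{g_0}_\gamma(U-\tilde U)+(U^{N_\gamma}-\tilde U^{N_\gamma})\big)\,\mathrm{sgn}_+(U-\tilde U)$, and since $Q^{g_0}_\gamma\ge 0$ and $t\mapsto t^{N_\gamma}$ is monotone increasing, on $\{U>\tilde U\}$ both $Q^{g_0}_\gamma(U-\tilde U)$ and $U^{N_\gamma}-\tilde U^{N_\gamma}$ are $\ge 0$, giving
\[
\int_M Q^{g_0}_\gamma (U-\tilde U)_+ +\int_M (U^{N_\gamma}-\tilde U^{N_\gamma})_+\ \le\ \int_M (g-\tilde g)\,\mathrm{sgn}_+(U-\tilde U)\ \le\ \int_M (g-\tilde g)_+ .
\]
Uniqueness of the weak solution follows by taking $g=\tilde g$ (then both $(U-\tilde U)_+$ and $(\tilde U-U)_+$ vanish in the senses above, and with $U,\tilde U\ge 0$ this forces $U=\tilde U$ a.e. on $M$; equality on $M$ plus the same elliptic equation in $X$ forces $U=\tilde U$ in $X$ by Theorem~\ref{solveFKS}).

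The main obstacle is making the test-function argument rigorous in the degenerate weighted setting. The quantities $U^{N_\gamma}$ and $U$ are only traces on $M$ of weighted-$H^1$ functions, the cutoff $\xi_\eps$ must be extended into $X$ so that it is an admissible test function (one extends $(U-\tilde U)_+$ itself, which is again in $H^1(X,(\rho^*)^{1-2\gamma})$, and truncates), and one must justify that the boundary integrals in the weak formulation make sense and pass to the limit — this is where the $A_2$-weight trace theory and an approximation by smooth $\varphi\in C^\infty_0(\bar X\times\cdots)$ as in Definition~\ref{defWeak} is needed. A second, more technical point is the global (as opposed to merely local) control near the conformal infinity: one needs the hypothesis $\lambda_1(g_+)\ge \frac{n^2}{2}-\gamma^2$ (equivalently the positivity built into Lemma~\ref{new-defining-function}) to ensure the energy $\int_X(\rho^*)^{1-2\gamma}|\nabla_{g^*}U|^2$ is a genuine norm controlling the boundary trace, and to rule out loss of mass at $\rho^*\to 0$; I would handle this by working with the compactified metric $g^*$ on the compact manifold-with-boundary $\bar X$ and invoking the weighted Poincaré/trace inequalities there.
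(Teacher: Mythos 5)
Your proposal follows essentially the same route as the paper: existence by minimizing the energy functional $\frac12\int (\rho^*)^{1-2\gamma}|\nabla U|^2+\frac12\int_M Q^{g_0}_\gamma U^2+\frac{1}{N_\gamma+1}\int_M U^{N_\gamma+1}-\int_M gU$ with coercivity from the weighted Poincar\'e and trace inequalities of \cite{FKS}, and the contraction inequality by testing the difference of the two weak formulations with a monotone approximation of $\mathrm{sgn}^+(U-\tilde U)$ and discarding the nonnegative gradient term (B\'enilan's $T$-contractivity). The only differences are cosmetic --- the paper works in local coordinates on a half-ball with the matrix $A(x,y)$ and uses a smooth approximation $p$ of the sign function rather than your truncation $\min\{(U-\tilde U)_+/\eta,1\}$ --- and your extra care with $(U_+)^{N_\gamma+1}$ and the sign of $U$ is a reasonable refinement.
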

\begin{proof}
Pick a point $p \in \partial_\infty X$ and consider the problem in local coordinates around $p$ in a geodesic half-ball $B^+_R \subset X$
\begin{equation}\label{ellipticlocal}
\left \{
\begin{array}{c}
{\rm div}\, (A(x,y) \nabla U)=0 \qquad
{\mbox{ in $B_R^+$}}
\\
 - \lim_{y\to 0} y^{1-2\gamma}\partial_{y} U+Q_\gamma ^{g_0}U+U^{N_\gamma}  =g
\qquad{\mbox{ on $\Gamma^0_R$}}\\
\end{array}\right.
\end{equation}
We add homogeneous Dirichlet boundary conditions on $\Gamma^+_R$. A weak solution $U$ of \eqref{ellipticlocal} satisfies
$$\int_{B^+_R}  A(x,y)\nabla U \cdot \nabla \varphi + \int_{\Gamma^0_R} Q^{g_0}_\gamma U\varphi +\int_{\Gamma^0_R}  U^{N_\gamma}\varphi= \int_{\Gamma^0_R} g \varphi,$$
for each $\varphi$ compactly supported. A way to produce a weak solution is to minimize

$$J(U)=\frac12 \int_{B^+_R} A(x,y)\nabla U\cdot \nabla U+\frac12 \int_{\Gamma^0_R} Q^{g_0}_\gamma U^2+\frac{1}{N_\gamma +1}\int_{\Gamma^0_R}U^{N_\gamma +1}-\int_{\Gamma^0_R} Ug. $$

The functional is coercive by the Poincar\'e inequality (see \cite{FKS} for the Poincar\'e inequality for $A_2$ weights), the Sobolev trace embedding (see \cite{FKS}) and Cauchy-Schwarz inequality.  Recall here that the weight $A(x,y)$ is $A_2$ in $X$.

We now establish the contractivity.
Let $U$ and $\tilde U$ be two solutions with data $g$ and $\tilde g$.  Consider in the weak formulation the test function $\varphi=p(U-\tilde U)$ where $p$ is any smooth monotone approximation of the sign function, $0 \leq p(s)\leq 1$ and $p'(s) \geq 0$. Then we have by testing the weak formulation
$$\int_\Omega  p'(U-\tilde U)A(x,y)\nabla (U-\tilde U) \cdot \nabla (U-\tilde U)+ \int_M Q^{g_0}_\gamma  (U -\tilde U )p(U-\tilde U) $$
$$+\int_M (U^{N_\gamma} -\tilde U^{N_\gamma})p(U-\tilde U)  =\int_M (g-\tilde g)p(U-\tilde U).$$
Using the fact that
$$
A(x,y) \geq C y^{1-2\gamma}>0\,.
$$
and passing to the limit in the test function as $p$ tends to the Heaviside function $\widehat{p}=\mbox{sign}^+$, we get the desired inequality. This is what B\'enilan calls $T$-contractivity, \cite{benilan} and it implies both contractivity and the comparison principle.
\end{proof}

The proof of the existence theorem then follows from the fact mild solutions are weak solutions as in \cite{PMECPAM}.

\begin{remark}[Extinction in finite time of the un-rescaled flow]
{\rm The previous un-rescaled flow extinguishes in finite time. The proof is contained in  \cite{PMECPAM} in the Euclidean setting but carries out to the manifold case. The only point to check is the existence of a Stroock-Varopoulos inequality, i.e. let $\gamma \in (0,1)$ and $q>1$ then
$$
\int_M (|v|^{q-2}v)P^g_{\gamma/2} v \geq \frac{4(q-1)}{q^2}\int_{M} |P^g_{\gamma/4} |v^{q/2}|^2.
$$

The Stroock-Varopoulos inequality is a general inequality on Dirichlet spaces. Our operator $P_\gamma^g$ generates a Dirichlet form in $L^2(M)$ (see \cite{bakry}).}
\end{remark}

\subsubsection{Proof of Theorem \ref{LWP}}

The proof is as follows: given $u_0$, we compute a solution $u(x,\tau)$ of the un-rescaled flow \eqref{problemulocalbis2}. The rescaled flow satisfies

\begin{equation}\label{rescaledv}
\left\{
\begin{array}{l}
\partial_t v^{N_\gamma}  =-P_\gamma ^{g_0} v+a(t) v^{N_\gamma}\,\,\, t>0, x \in M\\
v(0)=v_0,
\end{array}
\right.
\end{equation}
and the flow is volume preserving. Now, if $F$ denotes
$$\frac{dF}{dt}=a(t)
$$
then it is easy to see that the solution $u(x,\tau)$ satisfies
\begin{equation}\label{rescaledv}
\left\{
\begin{array}{l}
\partial_\tau u^{N_\gamma}  =-P_\gamma ^{g_0} u\,\qquad t>0, x \in M\\
u(0)=u_0,
\end{array}
\right.
\end{equation}
provided
$$
\frac{d\tau}{dt}=e^{F(t)(1-\frac{1}{N_\gamma})}
$$
and furthermore we have $\int_M u(x,\tau)e^{F(t)}=$ constant, giving
$$
e^{F(\tau)}=\frac{\mbox{const}}{ \int_M u(t,\tau)}.
$$
Hence, from $u(x,\tau)$, we compute $\int_M u(t,\tau)$, which gives $F(\tau)$, giving $v(x,t)$ by the relation
$$
v=e^F u,
$$
hence the solution to the rescaled flow.

\section{Global existence and proof of Theorem \ref{GWP}}

Let $F$ be the stereographic projection from $\mathbb S^n$ into $\RR^n$ with north pole $q_0 \in \phi(V)$,i.e. the inverse of
$$
F^{-1}(x)=\Big ( \frac{2x}{1+|x|^2}, \frac{|x|^2-1}{|x|^2+1}\Big ),\,\,\,x \in \RR^n
$$
and we set $q_0=(0,\cdot \cdot\cdot, 1).$ Define $w$ as
$$
(F^{-1})^*\tilde g =w ^{4/n-2\gamma}g_{\RR^n}.
$$
Then the function $w$ satisfies the equation
\begin{equation}\label{problemulocalbis}
\left\{
\begin{array}{l}
\partial_t  w^{N_\gamma}  =-(-\Delta)^{\gamma} w\,\,\, t>0, x \in \RR^n\\
w(0)=w_0,
\end{array}
\right.
\end{equation}
Note that the function $w_0$ is only defined on $\RR^n \backslash F(\Gamma)$ and
$$\lim_{x \to F(\Gamma)}w_0(x)=+\infty. $$

This equation has been investigated in \cite{PMECPAM}. We can assume that the manifold $M$ is not conformally covered by the sphere. Indeed this case has been investigated in \cite{JX}.

By a deep theorem by Schoen and Yau \cite{schoen-yau}, since we assume that the locally conformally flat manifold has positive Yamabe constant,  there exists a conformal diffeomorphism $\phi$ from the universal cover $\tilde M$ of $(M,[g_0])$ onto a dense domain $\Omega$ of the sphere $\mathbb S^n$. Thus the manifold $(M,[g_0])$ is the quotient of $\Omega$ under a Kleinian group and $\Gamma=\partial \Omega$ is the limit set of this group (note that $\Gamma \neq \emptyset$).  We set
$$
\tilde g= (\phi^{-1})^*\pi^*g
$$
and
$$
\tilde g=\tilde u ^{\frac{4}{n-2\gamma}} g_{\mathbb S^n}
$$
where $\pi:\tilde M \to M$ is the covering map and $g_{\mathbb S^n}$ is the round metric on the sphere. Therefore, by construction, the metric $\tilde g$ solves \eqref{problem} and by conformality, $\tilde u $ solves \eqref{problemu} with $g_0$ replaced by $g_{\mathbb S^n}$.

The following lemma is a corollary of Proposition 2.6 in \cite{schoen-yau}.
\begin{lemma}\label{SY}
Let $M$ be locally conformally flat manifold with non-negative Yamabe invariant. Assume equation \eqref{problemu} has a local solution defined on $(0,T)$ for some $T>0$. Then for any $\tilde t \in [0,T)$, we have, uniformly in $t \in [0,\tilde t]$:
$$
\lim_{x \to \Gamma} \tilde u(x,t)=+\infty.
$$
\end{lemma}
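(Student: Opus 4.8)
The plan is to exploit the fact that, near the limit set $\Gamma = \partial\Omega$, the conformal factor relating the round metric $g_{\mathbb S^n}$ on $\Omega$ to the pulled-back metric $\tilde g$ already blows up at $t=0$ (this is exactly the content of the quoted Proposition 2.6 of Schoen--Yau \cite{schoen-yau}: the developing map of a locally conformally flat manifold with non-negative Yamabe invariant forces the conformal factor $\tilde u_0$ to tend to $+\infty$ at every point of $\Gamma$), and then to show that the fast-diffusion evolution \eqref{problemu} cannot destroy this blow-up in finite time. First I would recall that $\tilde u(\cdot,t)$ solves $\partial_t(\tilde u^{N_\gamma}) = -P^{g_{\mathbb S^n}}_\gamma \tilde u + q^{\tilde g(t)}_\gamma \tilde u^{N_\gamma}$ on $\Omega$, with $\tilde u(\cdot,0)=\tilde u_0$ satisfying $\tilde u_0(x)\to+\infty$ as $x\to\Gamma$, and that $q^{\tilde g(t)}_\gamma$ is bounded on the compact time interval $[0,\tilde t]$ (it is an average of the fractional curvature of the smooth metric $g(t)$ on the compact manifold $M$, pulled back). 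So absorbing the linear term, $v := e^{-Ct}\tilde u$ is, up to harmless constants, a subsolution of the pure fractional fast-diffusion flow with the same blow-up behaviour of the initial datum; it therefore suffices to prove the statement for $\partial_t(u^{N_\gamma}) = -P^{g_{\mathbb S^n}}_\gamma u$.

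The key step is a comparison argument producing a lower barrier that retains the boundary blow-up. I would construct, for each $\tilde t<T$ and each $R$, a radially-symmetric-in-the-extension stationary or self-similar supersolution of the corresponding backward problem, or more simply: fix any large level $K$; by the hypothesis on $\tilde u_0$ there is a neighbourhood $\mathcal N_K$ of $\Gamma$ on which $\tilde u_0 \ge K$. Using the extension characterization of $P^{g_{\mathbb S^n}}_\gamma$ from Lemma \ref{new-defining-function} and the Harnack inequality of Theorem \ref{HarnackFKS} for the degenerate elliptic operator $L_a$, together with the $T$-contractivity / comparison principle established in the proof of Theorem \ref{Thelliptic} (which, via the Crandall--Liggett scheme, passes to the evolution), I would compare $\tilde u(\cdot,t)$ from below with the solution $\underline u(\cdot,t)$ of the same flow whose initial datum is $K\chi_{\mathcal N_K}$ (or a smooth version thereof). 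Since the flow is order preserving, $\tilde u(x,t)\ge \underline u(x,t)$; and since a localized initial lump of height $K$ does not collapse instantaneously — the extinction time is strictly positive and can be bounded below in terms of the mass of the lump and of $\tilde t$ — one gets $\liminf_{x\to\Gamma}\tilde u(x,t)\ge c(K,\tilde t)$ uniformly on $[0,\tilde t]$, with $c(K,\tilde t)\to\infty$ as $K\to\infty$. Letting $K\to\infty$ yields the claim.

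An alternative, perhaps cleaner, route is to work directly on the extension: $U(\cdot,\cdot,t)$ solves $\mathrm{div}((\rho^*)^{1-2\gamma}\nabla U)=0$ in $X$ with the dynamic boundary condition, and near a point of $\Gamma$ one has uniform ellipticity of $A(x,y)\sim y^a\alpha(x,y)\,\mathrm{Id}$; one then invokes the boundary Harnack principle for $A_2$-weighted equations to propagate the boundary blow-up of $U$ inward, uniformly in $t\in[0,\tilde t]$ because all the geometric coefficients and the coefficient $q^{g(t)}_\gamma$ are uniformly controlled on that interval. The main obstacle, in either approach, is making the blow-up \emph{uniform in $t$}: one must rule out that for times approaching $\tilde t$ the solution spreads or flattens near $\Gamma$ fast enough to lose the divergence. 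This is precisely where the quantitative extinction-time lower bound (from the Stroock--Varopoulos inequality, as in \cite{PMECPAM}) and the comparison principle have to be combined carefully; everything else is a routine consequence of the elliptic theory of Fabes--Kenig--Serapioni already recalled above.
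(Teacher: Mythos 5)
Your proposal diverges substantially from what the paper intends, and the divergence matters. The paper offers no PDE argument at all: it states that the lemma \emph{is} a corollary of Proposition~2.6 of Schoen--Yau. The point is that the Schoen--Yau estimate applies to \emph{every} time slice, not only to $t=0$. For each fixed $t\in[0,\tilde t]$ the metric $\tilde g(t)=(\phi^{-1})^*\pi^*g(t)=\tilde u(\cdot,t)^{4/(n-2\gamma)}g_{\mathbb S^n}$ is the pullback of a smooth metric on the compact manifold $M$, hence is complete on $\Omega$, and Schoen--Yau's estimate for complete conformal metrics on domains of the sphere (under the non-negative Yamabe hypothesis) forces the conformal factor $\tilde u(\cdot,t)$ to blow up at $\Gamma=\partial\Omega$. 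Uniformity in $t$ is then immediate: since $u$ is positive and continuous on the compact set $M\times[0,\tilde t]$, one has $u(\cdot,t)\ge c\,u_0(\cdot)$ with $c>0$ independent of $t$, hence $\tilde u(x,t)\ge c'\,\tilde u(x,0)\to\infty$ as $x\to\Gamma$. No comparison principle, barrier, or extinction-time estimate is needed; the evolution enters only through the trivial uniform equivalence of the metrics on a compact time interval.

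Your propagation argument, by contrast, has a concrete gap precisely at the step you flag as the main obstacle. You claim that comparing with the solution issued from $K\chi_{\mathcal N_K}$ yields $c(K,\tilde t)\to\infty$ as $K\to\infty$, but the exponent $N_\gamma=\frac{n+2\gamma}{n-2\gamma}$ is conformally critical: the Schoen--Yau rate gives $K\sim r^{-(n-2\gamma)/2}$ where $r$ is the width of $\mathcal N_K$, and the characteristic survival time of such a lump under $\partial_t(u^{N_\gamma})=-P_\gamma u$ scales like $r^{2\gamma}K^{N_\gamma-1}=O(1)$, independent of $K$. So for $\tilde t$ beyond this fixed time scale the lower barrier may have decayed and $c(K,\tilde t)$ need not diverge; the argument as sketched does not close. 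There are further unaddressed points (the comparison principle for the nonlocal operator $P_\gamma^{g_{\mathbb S^n}}$ must be formulated on all of $\mathbb S^n$ while your equation for $\tilde u$ only holds on $\Omega$, and the rescaling term $q^{\tilde g(t)}_\gamma$ is only known to be bounded a posteriori). I recommend replacing the whole construction by the direct slice-by-slice application of Proposition~2.6 described above.
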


Given $p_0\in M$, we choose a point $\tilde p_0 \in \tilde M$ and a neighborhood $V$ of $\tilde p_0$ such that $\pi(\tilde p_0)=p_0$ and $dist(\phi(V),\Gamma) >0$. Then there is some $C>0$ such that
$$
\tilde u_0=\tilde u(.,0) >C^{-1}
$$
and $\tilde u_0$ is smooth on $\phi(V)$.

The final step to prove Theorem \ref{GWP} is the following theorem.
\begin{theorem}\label{harnack}
Let $M$ be locally conformally flat manifold with non-negative Yamabe invariant. Let $u$ be a solution \eqref{problemu}. Then for any $t \geq 0$,  one has the Harnack inequality
$$
\sup_{x \in M}u \leq C \inf_{x \in M} u.
$$
\end{theorem}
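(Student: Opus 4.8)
The plan is to transfer the problem to the sphere via the Schoen--Yau developing map and then exploit the precise blow-up behavior near the limit set $\Gamma$ established in Lemma~\ref{SY}, together with the Harnack inequality for $A_2$-weighted degenerate elliptic equations (Theorem~\ref{HarnackFKS}) applied to the Caffarelli--Silvestre / Chang--Gonz\'alez extension. Concretely, lift the flow to $\tilde M$, identify $\tilde M$ conformally with the dense domain $\Omega \subset \mathbb S^n$ and recall that the lifted conformal factor $\tilde u$ solves \eqref{problemu} on $\Omega$ with $g_0$ replaced by the round metric. Since $M = \Omega / \Gamma_{\mathrm{Kl}}$ is compact, a supremum or infimum of $u$ over $M$ is attained, and the statement $\sup_M u \le C \inf_M u$ is equivalent to a uniform two-sided bound on $\tilde u$ over a \emph{fundamental domain} $\mathcal D \subset \Omega$. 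The point of Lemma~\ref{SY} is that $\tilde u$ does not merely fail to be bounded near $\Gamma$ --- it blows up, $\tilde u \to +\infty$ at $\Gamma$ --- so the infimum of $\tilde u$ over $\mathcal D$ cannot be approached along a sequence escaping to $\partial\Omega = \Gamma$; it is therefore attained at an interior point of $\Omega$, and likewise the supremum is finite and attained in the interior (the supremum being finite because $\tilde u_0$ is smooth there and the flow is smooth on $(0,T)$, hence locally bounded on compact subsets of $\Omega$).

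With this reduction in hand, the core is a \emph{local} Harnack estimate with constant independent of the point. First I would pass to the extension picture: write $U$ for the Chang--Gonz\'alez extension of $\tilde u$ into $X^{n+1}$ (here the relevant $X$ is hyperbolic space, conformal infinity $\mathbb S^n$), so that by Lemma~\ref{new-defining-function} and the elliptic analysis of Section~3, $U$ solves a uniformly degenerate elliptic equation $\operatorname{div}((\rho^*)^{1-2\gamma}\nabla U)=0$ whose coefficient matrix $A(x,y)$ satisfies $A(x,y)\sim y^{a}\alpha(x,y)\mathrm{Id}$ with $y^a \in A_2$ and $\alpha$ uniformly elliptic. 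The time enters only through the boundary condition, but for a fixed $t$ the function $U(\cdot,t)$ is, in local half-ball coordinates $B_{4R}^+(x_0)$ around any interior point of $\Omega$, a \emph{positive} (away from $\Gamma$, $\tilde u>0$) solution of $L_a U = 0$ in the sense of Theorem~\ref{HarnackFKS}; applying that theorem gives $\sup_{B_R^+} U \le C \inf_{B_R^+} U$ with $C = C(n,\gamma)$ independent of $R$ and of the center. Evaluating on the boundary slice $\{y=0\}$ yields a local Harnack inequality for $\tilde u$ on balls of a fixed radius inside $\Omega$, with a uniform constant.

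To upgrade this local inequality to the global one, cover the (closure of the) fundamental domain $\mathcal D$, or rather a slight enlargement of it inside $\Omega$, by finitely many such balls --- finitely many precisely because $\tilde u \to +\infty$ near $\Gamma$ forces the relevant region (where $\tilde u$ is comparable to $\inf_{\mathcal D}\tilde u$) to stay at a definite distance from $\Gamma$, hence to be compactly contained in $\Omega$ --- and chain the local estimates along a connected path joining a point where $\tilde u$ is near its infimum to one where it is near its supremum. This produces $\sup_{\mathcal D}\tilde u \le C' \inf_{\mathcal D}\tilde u$ with $C'$ depending on $n$, $\gamma$, and the number of balls in the chain; the latter can be bounded in terms of the geometry of $M$ (diameter, injectivity radius) uniformly, and in particular independently of $t$, because the metric is controlled along the flow. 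Descending back to $M$ gives $\sup_M u \le C \inf_M u$.

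The main obstacle is the \emph{uniformity of the covering} --- equivalently, ruling out that the chain of balls must grow longer and longer as $t$ varies, which would let the Harnack constant degenerate. This is exactly where Lemma~\ref{SY}'s uniform-in-$t$ blow-up at $\Gamma$ is essential: it guarantees that the ``good'' region stays a fixed compact distance from $\Gamma$ over all $t \in [0,\tilde t]$, so a single finite covering works for all those times. A secondary technical point is that Theorem~\ref{HarnackFKS} as quoted is for Euclidean balls and the fixed weight $|y|^a$, whereas here the weight and the elliptic operator are only comparable to these; one must check (as in \cite{FKS, FJK}) that the Harnack constant depends only on the $A_2$ constant of the weight and the ellipticity constants of $\alpha$, which are uniform across $\Omega$ by compactness of $M$ and smoothness of the geometry --- a routine but necessary verification.
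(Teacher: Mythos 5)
Your proposal takes a genuinely different route from the paper, and unfortunately the central step does not go through as stated. The paper does not use the Fabes--Kenig--Serapioni Harnack inequality at all here: it derives the Harnack inequality from the stronger pointwise gradient estimate $\sup_M |\nabla_{g_0} u|/u \le C$ (which integrates along geodesics on the compact $M$ to give $\sup u \le C\inf u$), and that gradient estimate is proved \`a la Ye by the method of moving planes. Concretely, one lifts to the developing map, stereographically projects so that $\tilde u$ has the standard asymptotic expansion at infinity with coefficients $a_0, a_i, a_{ij}$, and then uses the reflection/monotonicity results of Jin--Xiong (their Propositions 2.3 and 2.8) to bound $y_i(t)=a_i/((n-2\gamma)a_0)$ uniformly in $t$; Lemma~\ref{SY} enters to handle the singular set $F(\Gamma)$ when starting the moving plane. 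This is an essentially one-dimensional-in-space, reflection-based control of the first-order Taylor coefficient at every point of $M$, not an elliptic Harnack chain.

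The gap in your argument is the application of Theorem~\ref{HarnackFKS} on half-balls $B_{4R}^+(x_0)$ centered at points of $M=\{y=0\}$. That theorem is an \emph{interior} Harnack inequality for positive solutions of $L_a U=0$ in a full ball $B_{4R}(x_0)\subset\RR^{n+1}$. At a boundary point the extension $U(\cdot,t)$ solves the degenerate equation only in $\{y>0\}$, with the nonlinear, time-dependent Neumann condition
\begin{equation*}
-\lim_{y\to 0} y^{1-2\gamma}\partial_y U \;=\; \partial_t\bigl(U^{N_\gamma}\bigr) + Q^{g_0}_\gamma U ,
\end{equation*}
and the flux $\partial_t(U^{N_\gamma})$ has no sign and no a priori bound --- it is precisely the quantity one is trying to control. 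One can only reduce a half-ball Harnack to the interior FKS Harnack by even reflection across $\{y=0\}$, which requires the weighted conormal derivative to vanish there; it does not. An interior Harnack chain staying strictly inside $X$ gives no information on $\sup_M u/\inf_M u$, since the constants degenerate as the chain approaches the conformal infinity. A secondary (and in your write-up acknowledged but unresolved) issue is circularity in the uniformity of the covering: you justify the time-independence of the chain length by saying ``the metric is controlled along the flow,'' but uniform two-sided control of the metric is exactly the content of the Harnack inequality being proved. Lemma~\ref{SY} keeps the infimum away from $\Gamma$ but does not by itself bound the ratio $\sup/\inf$ on a fundamental domain uniformly in $t$.
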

The proof of Theorem \ref{harnack} follows the approach by Ye \cite{ye}. As a consequence of the volume-preserving character of the flow, there exists $\alpha,\beta>0$ such that for any $t \geq 0$ one has for any $x \in M$
$$\alpha < u(t,x) <\beta .$$

Hence the solution exists globally and the theorem is proved, is smooth by the results in \cite{vazquez} and the convergence of the flow is ensured by the well-known results of Simon \cite{simon}. We now come to the proof of Theorem \ref{harnack}.

This is based on the following theorem
\begin{theorem}
Let $\gamma \in (0,1)$ and assume that $M$ is locally conformally flat with nonnegative fractional curvature. Then there exists $C$ not depending on $u$ such that
\begin{equation}
\sup_{M}\frac{|\nabla_{g_0} u|}{u} \leq C.
\end{equation}
\end{theorem}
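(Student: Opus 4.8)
The plan is a contradiction/blow-up argument: reduce to the fractional fast diffusion equation on $\RR^n$ via the developing map, rescale at a point of nearly maximal logarithmic gradient, and classify the resulting eternal limit. Since $M$ is locally conformally flat with positive Yamabe constant, Schoen--Yau provides a developing map onto a dense domain $\Omega\subset\mathbb S^n$; composing with stereographic projection $F$, the lift of a solution $u$ of \eqref{problemu} becomes a function $w$ solving $\partial_t w^{N_\gamma}=-(-\Delta)^\gamma w$ on $\RR^n\setminus F(\Gamma)$, with $w\to+\infty$ on $F(\Gamma)$ by Lemma \ref{SY}. The quantity $|\nabla_{g_0}u|/u=|\nabla_{g_0}\log u|$ transforms, under the conformal changes relating $g_0$, $g_{\mathbb S^n}$ and $g_{\RR^n}$, into $|\nabla_{g_{\RR^n}}\log w|$ up to a factor controlled by fixed smooth conformal factors on a fundamental domain; hence it is equivalent to prove that $|\nabla\log w|$ is bounded on $\RR^n\setminus F(\Gamma)$ by a constant independent of the solution.

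\textbf{Point selection and rescaling.} Suppose the bound fails along a sequence of flows with times $t_k$ and points $p_k$ where $m_k:=|\nabla\log w_k|(p_k,t_k)\to\infty$. A Schoen-type doubling argument lets us assume $|\nabla\log w_k|\le 2m_k$ on a parabolic cylinder of size $A_k m_k^{-1}$, $A_k\to\infty$, around $(p_k,t_k)$; one must also check, using the controlled geometric behavior of $w$ near $F(\Gamma)$ (Lemma \ref{SY} and Schoen--Yau), that after a deck transformation the $p_k$ do not escape to the limit set. Rescale with the parabolic scaling dictated by the equation, $\tilde w_k(x,\tau)=w_k(p_k+m_k^{-1}x,\,t_k+\theta_k\tau)/w_k(p_k,t_k)$ for an appropriate $\theta_k$, so that $\tilde w_k(0,0)=1$, $|\nabla\log\tilde w_k(0,0)|=1$, and $|\nabla\log\tilde w_k|\le 2$ on cylinders exhausting $\RR^{n+1}$, whence $e^{-2R}\le\tilde w_k\le e^{2R}$ on $B_R\times(-R,R)$. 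Passing the Chang--Gonz\'alez extension to these scales, the degenerate elliptic estimates (Theorems \ref{HolderFKS}, \ref{HarnackFKS}) together with the regularity theory of \cite{PMECPAM} and \cite{vazquez} yield local compactness, and $\tilde w_k\to w_\infty$, a nontrivial eternal solution of $\partial_t w_\infty^{N_\gamma}=-(-\Delta)^\gamma w_\infty$ on $\RR^n$ that is everywhere positive, satisfies $|\nabla\log w_\infty|\le 2$, and has $|\nabla\log w_\infty|(0,0)=1$.

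\textbf{Liouville step (the main obstacle).} It remains to show that no such $w_\infty$ exists, i.e. that an everywhere positive eternal solution of the fractional fast diffusion equation with globally bounded logarithmic gradient is independent of $x$, contradicting $|\nabla\log w_\infty|(0,0)=1$. For $\gamma=1$ this is the classical Bernstein estimate on $|\nabla\log u|^2$ for the parabolic equation, but the nonlocality of $(-\Delta)^\gamma$ precludes a direct local computation; the plan is to work on the extension, where the equation becomes local and degenerate elliptic in $\RR^{n+1}_+$, and to combine the boundary Harnack inequality (Theorem \ref{HarnackFKS}; here $Q^{g_0}_\gamma\ge 0$, equivalently nonnegativity of the fractional curvature, is used to give the zero-order boundary term in \eqref{extended2} the favorable sign) with the known sharp decay of Barenblatt-type profiles and the absence of nontrivial eternal solutions in the fast diffusion range (extinction phenomena, cf. \cite{PMECPAM, PMEADV}). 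I expect this classification to be the crux, the delicate point being that the blow-up limit need not have finite mass, so the contradiction must be drawn from the gradient bound and the extension geometry rather than from a mass/extinction argument alone. An alternative to the last two steps would be a moving-sphere argument performed directly on $\Omega\subset\mathbb S^n$, exploiting $\tilde u\to+\infty$ at $\partial\Omega$, but this too hinges on a nonlocal maximum principle via the extension. Once $w_\infty$ is forced to be constant the contradiction is reached and the estimate follows.
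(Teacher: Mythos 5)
Your strategy is genuinely different from the paper's, and as written it has a gap that you yourself identify but do not close. The paper follows Ye's method: after stereographic projection with north pole in $\phi(V)$, it writes the asymptotic expansion of $\tilde u(t,x)$ at infinity with coefficients $a_0, a_i, a_{ij}$, observes that the logarithmic gradient at the base point is controlled by $y_i=a_i/((n-2\gamma)a_0)$, and bounds $y$ uniformly in time by a moving-plane argument: the reflection inequality $\tilde u(x,0)>\tilde u(x^\lambda,0)$ for $\lambda\ge\lambda_0$ is propagated in time by the parabolic maximum principle of Jin--Xiong (their Proposition 2.3), and a violation of $y_n(s)<\lambda_0$ is excluded by a Hopf-type lemma (their Proposition 2.8) applied after a Kelvin transform, contradicting the expansion. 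No blow-up analysis or Liouville theorem is needed; the nonlocal maximum principles of \cite{JX} and the Schoen--Yau control near $\Gamma$ (Lemma \ref{SY}) do all the work. Your sketch of an "alternative moving-sphere argument performed directly on $\Omega$" is in fact essentially the route the paper takes.

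The gap in your main line of argument is the Liouville step: you need that every positive eternal solution of $\partial_t w^{N_\gamma}=-(-\Delta)^\gamma w$ on $\RR^n$ with globally bounded logarithmic gradient is independent of $x$. You correctly flag this as "the crux," but you only indicate ingredients (boundary Harnack on the extension, Barenblatt decay, extinction phenomena) without assembling a proof, and no such classification is available in \cite{PMEADV,PMECPAM} or elsewhere in the cited literature; indeed in the fast diffusion range eternal solutions and infinite-mass solutions are delicate, and the extinction results you invoke do not apply to a limit that need not have finite mass. Until that Liouville theorem is established, the contradiction is not reached and the proof does not close. Two further steps are also only asserted: (i) that the blow-up points $p_k$ can be kept in a fixed compact region away from $F(\Gamma)$ after deck transformations with uniformly controlled conformal factors, and (ii) the compactness of the rescaled sequence, which requires uniform regularity estimates for the fractional fast diffusion equation at the rescaled scales that you would need to extract from \cite{PMECPAM,vazquez} and justify under the parabolic rescaling $\theta_k$ you leave unspecified.
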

\begin{proof}
The proof follows the argument of \cite{ye}. The solution $\tilde u(t,x)$ has the expansion at infinity
$$
\tilde u(t,x)=\frac{2^{(n-2\gamma)/2}}{|x|^{n-2\gamma}}\Big ( a_0+\frac{a_ix_i}{|x|^2}+\Big ( a_{ij}-\frac{n-2\gamma}{2}\delta_{ij}\Big )\frac{x_i x_j}{|x|^4}+O(|x|^{-3})\Big ).
$$
A similar expansion holds for the derivatives of $\tilde u$:
$$
\frac{\partial \tilde u}{\partial x_i}(t,x)=2^{(n-2\gamma)/2}\Big (-\frac{n-2\gamma}{|x|^{n-2\gamma+2}}x_i\Big ( a_0+\frac{a_j x_j}{|x|^2}\Big )+ \frac{a_{i}}{|x|^{n-2\gamma+2}}-\frac{2x_i a_j x_j}{|x|^{n-2\gamma+4}}$$
$$
+O(|x|^{-(n-2\gamma+3)})\Big )
$$
where
$$
a_0(s)=u(q_0,t),
$$
$$
a_i(s)=\frac{\partial (u\circ G)}{\partial x_i}(0)
$$
and
$$
a_{ij}(s)=\frac{\partial^2 (u\circ G)}{2\partial x_i x_j}(0)
$$
where $q_0$ denotes the north pole and $G$ the inverse of the stereographic projection.

Denoting $y_i(t)$,
$$
y_i(t)=\frac{a_i}{(n-2\gamma)a_0}
$$
and $y(s)=(y_1(s),...,y_n(s))$, it is  enough to prove a uniform bound on $y$ in the local existence range. Fix a time $T$. After a rotation and a reflection, we may assume
$$
y_n(T)=\max_i |y_i(T)|.
$$
A standard argument gives that for some $\lambda_0 >0$ we have: for each $\lambda > \lambda_0$ the following holds
$$
\tilde u(x,0) >\tilde u (x^\lambda,0)\,\,\,x_n <\lambda
$$
where $x^\lambda=(x_1,....,x_{n-1},2\lambda-x_n)$, the reflection point w.r.t the hyperplane $\left \{ x_n=\lambda \right \}.$
Note that here we have a singular set $F(\Gamma)$ for $\tilde u(x,0)$. However, thanks to Lemma 4.1, the previous estimate holds.

We may assume that $F(\Gamma)$ lies strictly below the plane $\left \{ x_n=\lambda_0 \right \}.$ We now follow the proof of \cite{JX}.
By proposition 2.3 in \cite{JX}, one has uniformly in $[0,T)$
$$
\tilde u(x,t) >\tilde u (x^\lambda,t)\,\,\,x_n <\lambda, \lambda \geq \lambda_0 .
$$
We  claim that
$$
\max_{s \in [0,T]} y_n(s) < \lambda_0
$$
If not, there exists $\tilde s \in (0,T]$ such that $y_n(\tilde s)=\max_{0 \leq s \leq T} y_n(s) \leq \lambda_0$. Setting $\lambda=y_n(\tilde s)$ one gets after defining $\bar u(x,s)= \tilde u(x+y_n(\tilde s),s)$
$$
\bar u(x',x_n,s) >\bar u (x',-x_n,s)\,\,\,s\in [0,T],x_n <0.
$$
Then taking Kelvin transforms $\bar u_1(x,s)=\frac{1}{|x|^{n-2\gamma}}\bar u(x/|x|^2,s)$ (satisfying the same equation) we have
$$
\bar  u_1(x',x_n,s) >\bar u_1 (x',-x_n,s)\,\,\,s\in [0,T],x_n <0.
$$
Invoking proposition 2.8 in \cite{JX}, one has
$$
\frac{\partial (\bar u_1 (x',x_n,s)-\bar u_1(x',-x_n,s)}{\partial x_n}<0\,\,\,(x=0,s=\tilde s).
$$
This contradicts the asymptotic expansion of $\tilde u(x+y,s)$. Hence to derive the Harnack inequality, the previous argument gives
$$
\frac{|\nabla _{\mathbb S^n} \tilde u|}{ \tilde u} \leq C
$$
on $\Phi(V')$ where $V'$ is a neighborhood of $p_0$ with $V' \subset \subset V$. Now, by Lemma \ref{SY}, this leads
$$
\frac{|\nabla _{g_0} \tilde u|}{\ \tilde u} \leq C
$$
on $\pi(V')$. Since $M$ is compact, one can cover $M$ by finitely many $V'$ and up to enlarging $C$ this gives the theorem.
\end{proof}

\section*{Acknowledgments}
P. Daskalopoulos has  been partially supported by NSF grant DMS-1600658 and J.L. V\'azquez  by Project MTM2014-52240-P (Spain). The second author would like to acknowledge the hospitality of the department of Mathematics of Columbia University where part of this work was discussed.



\bibliographystyle{alpha}
\bibliography{biblioflow}

\begin{thebibliography}{dPQRV12}

\bibitem[Bak94]{bakry}
Dominique Bakry.
\newblock L'hypercontractivit\'e et son utilisation en th\'eorie des
  semigroupes.
\newblock In {\em Lectures on probability theory ({S}aint-{F}lour, 1992)},
  volume 1581 of {\em Lecture Notes in Math.}, pages 1--114. Springer, Berlin,
  1994.

\bibitem[Ben72]{benilan}
P.~Benilan.
\newblock Equations d'\'evolution dans un espace de banach quelconque et
  applications.
\newblock {\em Ph. D. Thesis, Univ. Orsay (in French)}, 1972.

\bibitem[Bre02]{brendleAsian}
Simon Brendle.
\newblock A generalization of the {Y}amabe flow for manifolds with boundary.
\newblock {\em Asian J. Math.}, 6(4):625--644, 2002.

\bibitem[Bre05]{brendle1}
Simon Brendle.
\newblock Convergence of the {Y}amabe flow for arbitrary initial energy.
\newblock {\em J. Differential Geom.}, 69(2):217--278, 2005.

\bibitem[Bre07]{brendle2}
Simon Brendle.
\newblock Convergence of the {Y}amabe flow in dimension 6 and higher.
\newblock {\em Invent. Math.}, 170(3):541--576, 2007.

\bibitem[CC16]{CaseChang}
Jeffrey~S. Case and Sun-Yung~Alice Chang.
\newblock On fractional {GJMS} operators.
\newblock {\em Comm. Pure Appl. Math.}, 69(6):1017--1061, 2016.

\bibitem[CG11]{CG}
Sun-Yung~Alice Chang and Mar{\'{\i}}a del~Mar Gonz{\'a}lez.
\newblock Fractional {L}aplacian in conformal geometry.
\newblock {\em Adv. Math.}, 226(2):1410--1432, 2011.

\bibitem[Cho92]{chow}
Bennett Chow.
\newblock The {Y}amabe flow on locally conformally flat manifolds with positive
  {R}icci curvature.
\newblock {\em Comm. Pure Appl. Math.}, 45(8):1003--1014, 1992.

\bibitem[CL71]{CL}
M.~G. Crandall and T.~M. Liggett.
\newblock Generation of semi-groups of nonlinear transformations on general
  {B}anach spaces.
\newblock {\em Amer. J. Math.}, 93:265--298, 1971.

\bibitem[CLO06]{li}
Wenxiong Chen, Congming Li, and Biao Ou.
\newblock Classification of solutions for an integral equation.
\newblock {\em Comm. Pure Appl. Math.}, 59(3):330--343, 2006.

\bibitem[CS07]{CafSil}
Luis Caffarelli and Luis Silvestre.
\newblock An extension problem related to the fractional {L}aplacian.
\newblock {\em Comm. Partial Differential Equations}, 32(7-9):1245--1260, 2007.

\bibitem[dPQRV11]{PMEADV}
Arturo de~Pablo, Fernando Quir{\'o}s, Ana Rodr{\'{\i}}guez, and Juan~Luis
  V{\'a}zquez.
\newblock A fractional porous medium equation.
\newblock {\em Adv. Math.}, 226(2):1378--1409, 2011.

\bibitem[dPQRV12]{PMECPAM}
Arturo de~Pablo, Fernando Quir{\'o}s, Ana Rodr{\'{\i}}guez, and Juan~Luis
  V{\'a}zquez.
\newblock A general fractional porous medium equation.
\newblock {\em Comm. Pure Appl. Math.}, 65(9):1242--1284, 2012.

\bibitem[Esc92]{escobar}
Jos{\'e}~F. Escobar.
\newblock Conformal deformation of a {R}iemannian metric to a scalar flat
  metric with constant mean curvature on the boundary.
\newblock {\em Ann. of Math. (2)}, 136(1):1--50, 1992.

\bibitem[FG02]{FG}
Charles Fefferman and C.~Robin Graham.
\newblock {$Q$}-curvature and {P}oincar\'e metrics.
\newblock {\em Math. Res. Lett.}, 9(2-3):139--151, 2002.

\bibitem[FJK82]{FJK}
E.~Fabes, D.~Jerison, and C.~Kenig.
\newblock The {W}iener test for degenerate elliptic equations.
\newblock {\em Ann. Inst. Fourier (Grenoble)}, 32(3):vi, 151--182, 1982.

\bibitem[FKS82]{FKS}
E.~B. Fabes, C.~E. Kenig, and R.~P. Serapioni.
\newblock The local regularity of solutions of degenerate elliptic equations.
\newblock {\em Comm. Partial Differential Equations}, 7(1):77--116, 1982.

\bibitem[GJMS92]{GJMS}
C.~Robin Graham, Ralph Jenne, Lionel~J. Mason, and George A.~J. Sparling.
\newblock Conformally invariant powers of the {L}aplacian. {I}. {E}xistence.
\newblock {\em J. London Math. Soc. (2)}, 46(3):557--565, 1992.

\bibitem[GMS12]{GMS}
Maria del~Mar Gonz{\'a}lez, Rafe Mazzeo, and Yannick Sire.
\newblock Singular solutions of fractional order conformal {L}aplacians.
\newblock {\em J. Geom. Anal.}, 22(3):845--863, 2012.

\bibitem[GQ13]{QG}
Mar{\'{\i}}a del~Mar Gonz{\'a}lez and Jie Qing.
\newblock Fractional conformal {L}aplacians and fractional {Y}amabe problems.
\newblock {\em Anal. PDE}, 6(7):1535--1576, 2013.

\bibitem[GZ03]{GZ}
C.~Robin Graham and Maciej Zworski.
\newblock Scattering matrix in conformal geometry.
\newblock {\em Invent. Math.}, 152(1):89--118, 2003.

\bibitem[Ham88]{hamilton}
Richard~S. Hamilton.
\newblock The {R}icci flow on surfaces.
\newblock In {\em Mathematics and general relativity ({S}anta {C}ruz, {CA},
  1986)}, volume~71 of {\em Contemp. Math.}, pages 237--262. Amer. Math. Soc.,
  Providence, RI, 1988.

\bibitem[JX14]{JX}
Tianling Jin and Jingang Xiong.
\newblock A fractional {Y}amabe flow and some applications.
\newblock {\em J. Reine Angew. Math.}, 696:187--223, 2014.

\bibitem[KMW]{KMW}
S.~Kim, M.~Musso, and J.~Wei.
\newblock Existence theorems of the fractional yamabe problem.
\newblock {\em Preprint, https://arxiv.org/abs/1603.06617}.

\bibitem[Li04]{li2}
Yan~Yan Li.
\newblock Remark on some conformally invariant integral equations: the method
  of moving spheres.
\newblock {\em J. Eur. Math. Soc. (JEMS)}, 6(2):153--180, 2004.

\bibitem[Muc72]{muck}
B.~Muckenhoupt.
\newblock Weighted norm inequalities for the {H}ardy maximal function.
\newblock {\em Trans. Amer. Math. Soc.}, 165:207--226, 1972.

\bibitem[QR06]{QR}
Jie Qing and David Raske.
\newblock Compactness for conformal metrics with constant {$Q$} curvature on
  locally conformally flat manifolds.
\newblock {\em Calc. Var. Partial Differential Equations}, 26(3):343--356,
  2006.

\bibitem[Sim83]{simon}
Leon Simon.
\newblock Asymptotics for a class of nonlinear evolution equations, with
  applications to geometric problems.
\newblock {\em Ann. of Math. (2)}, 118(3):525--571, 1983.

\bibitem[SS03]{struwe}
Hartmut Schwetlick and Michael Struwe.
\newblock Convergence of the {Y}amabe flow for ``large'' energies.
\newblock {\em J. Reine Angew. Math.}, 562:59--100, 2003.

\bibitem[SY88]{schoen-yau}
R.~Schoen and S.-T. Yau.
\newblock Conformally flat manifolds, {K}leinian groups and scalar curvature.
\newblock {\em Invent. Math.}, 92(1):47--71, 1988.

\bibitem[VdPQR]{vazquez}
Juan-Luis V\'azquez, Arturo de~Pablo, Fernando Quiros, and Ana Rodriguez.
\newblock Classical solutions and higher order regularity for nonlinear
  fractional diffusion equations.
\newblock {\em To appear in JEMS}.

\bibitem[Ye94]{ye}
Rugang Ye.
\newblock Global existence and convergence of {Y}amabe flow.
\newblock {\em J. Differential Geom.}, 39(1):35--50, 1994.

\end{thebibliography}

\

\medskip

\small

\begin{center}
P. Daskalopoulos, Department of Mathematics \\
Columbia University \\
2990 Broadway \\
New York, NY 10027\\
pdaskalo@math.columbia.edu
\end{center}

\begin{center}
Y. Sire, Department of Mathematics \\
Johns Hopkins University \\
3400 N. Charles Street \\
Baltimore, MD 21218\\
sire@math.jhu.edu
\end{center}

\begin{center}
J. L. V\'azquez, Departamento de Matem\'aticas,
\\
Universidad Aut\'onoma de Madrid,\\
Campus de Cantoblanco, \\
28049 Madrid, Spain \\
juanluis.vazquez@uam.es
\end{center}

\end{document}